\documentclass{amsart}
\usepackage[margin=3.5cm]{geometry}
 
% \let\vec\relax
% restore the original definition in fontmath.ltx
\DeclareMathAccent{\vec}{\mathord}{letters}{"7E}
\usepackage{amsmath}  
\usepackage{graphicx} 
\usepackage{epstopdf} % needed if you have eps figures in a pdflatex manuscript % 
\usepackage{amssymb} 
\usepackage{mathtools}
\usepackage{amstext}
\usepackage{relsize}
\usepackage{fancyhdr}
\usepackage{array}
\usepackage{float}
\usepackage{graphicx,xcolor,colortbl}    
\usepackage{hyperref}  % link
\usepackage{cleveref}
\usepackage{xspace}
\usepackage[title]{appendix}

\newtheorem{assumption}{Assumption}[section] 
\newtheorem{lemma}{Lemma}[section]
\newtheorem{definition}{Definition}[section]
\newtheorem{remark}{Remark}[section]
\newtheorem{theorem}{Theorem}[section]

\newcommand\numberthis{\addtocounter{equation}{1}\tag{\theequation}}

\newcommand{\hmin}{h_{\min}}
\newcommand{\hmax}{h_{\max}}
\newcommand{\ito}{It\^{o}\xspace}
\newcommand{\Cx}{C_{\texttt{X}}\xspace}

\newcommand{\prob}[1]{\mathbb{P}\left[  {#1} \right]}
\newcommand{\expect}[1]{\mathbb{E}\left[  {#1} \right]}
\newcommand{\Cp}[1]{\mathbf{\gamma}_{#1}}
% \newcommand{\Clevy}[1]{C_L(#1)} % levy area

% Levy Area
\newcommand{\Clevy}[2][]{C^{#1}_{\texttt{LA}}\left(#2\right)} 
% Scheme Regularity
\newcommand{\Cscheme}[2][]{C^{#1}_{\texttt{SR}}\left(#2\right)} 
% Path Regularity
%\newcommand{\CpathBar}[3][]{\overline{C}^{#1\{#2\}}_{\texttt{PR}}\left(#3\right)}  % optional: add power #1 in front of #2

\newcommand{\CpathBar}[3][]{\overline{C}^{#1\{#2\}}_{\texttt{PR}}}%\left(#3\right)  % optional: add power #1 in front of #2

\newcommand{\Cpath}[2][]{C^{#1}_{\texttt{PR}}\left(#2\right)} 
% Iterated Stochastic Integral
\newcommand{\Citerated}[2][]{C^{#1}_{\texttt{ISI}}\left(#2\right)} 
% Taylor Error 
\newcommand{\Ctaylor}[2][]{C^{#1}_{\texttt{TE}}\left(#2\right)} 

\newcommand{\NOTE}[1]{\textcolor{black}{{#1}}}
\newcommand{\Note}[1]{\textcolor{black}{{#1}}} 
 
\newcommand{\NoteTypo}[1]{\textcolor{black}{{#1}}} 

%Use \NOTE for corrections on second revision. Delete \Note.

\begin{document} 

% \title{Strong convergence of an adaptive time-stepping Milstein method for SDEs with monotone coefficients}

% \titlerunning{An adaptive time-stepping Milstein method} % if too long for running head 
% \author{C\'onall Kelly \and Gabriel J. Lord \and Fandi Sun}
% %\author{C. Kelly \and G. J. Lord \and F. Sun}
% \institute{C. Kelly \at School of Mathematical Sciences, University College Cork, Ireland.\\ 
% \email{conall.kelly@ucc.ie}  
% \\ 
% \and G. J. Lord  \at  Mathematics, IMAPP, Radboud University, Nijmegen, The Netherlands.
% \\
% \email{gabriel.lord@ru.nl}\\
% \and F. Sun \at Maxwell Institute, Department of Mathematics, MACS, Heriot-Watt University, Edinburgh, UK.\\
% \email{fs30@hw.ac.uk}
% } 

% \date{Received: date / Accepted: date}  
% \maketitle 

\title[An adaptive time-stepping Milstein method]{Strong convergence of an adaptive time-stepping Milstein method for SDEs with monotone coefficients}

%    author one information
\author{C\'onall Kelly}
\address{School of Mathematical Sciences, University College Cork, Ireland.}
\curraddr{}
\email{conall.kelly@ucc.ie}
\thanks{}

%    author two information
\author{Gabriel Lord}
% \address{Maxwell Institute, Department of Mathematics, MACS, Heriot-Watt University, Edinburgh, UK.}
\address{Mathematics, IMAPP, Radboud University, Nijmegen, The Netherlands.}
\email{gabriel.lord@ru.nl}
\thanks{}

%    author three information
\author{Fandi Sun}
\address{Maxwell Institute, Department of Mathematics, MACS, Heriot-Watt University, Edinburgh, UK.}
\curraddr{}
\email{fs30@hw.ac.uk}
\thanks{}

\date{\today}

\keywords{Stochastic differential equations \and adaptive time-stepping \and Milstein method \and non-globally Lipschitz coefficients \and strong convergence.}

\begin{abstract} 
We introduce an explicit adaptive Milstein method for stochastic differential equations (SDEs) with no commutativity condition. The drift and diffusion are separately locally Lipschitz and together satisfy a monotone condition. This method relies on a class of path-bounded time-stepping strategies which work by reducing the stepsize as solutions approach the boundary of a sphere, invoking a backstop method in the event that the timestep becomes too small. We prove that such schemes are strongly $L_2$ convergent of order one.  This order is inherited by an explicit adaptive Euler-Maruyama scheme in the additive noise case. Moreover we show that the probability of using the backstop method at any step can be made arbitrarily small. We compare our method to other fixed-step Milstein variants on a range of test problems.
\end{abstract} 

\maketitle

\section{Introduction} \label{sec:intro}
We investigate the use of adaptive time-stepping strategies in the construction of a strongly convergent explicit Milstein-type numerical scheme for a \NOTE{$d$}-dimensional stochastic differential equation (SDE) of \ito-type
\NOTE{on the probability space $(\Omega , \mathcal{F} , \mathbb{P})$,}
\begin{align}
X(t)= X(0)+\int_{0}^{t}f(X(r))dr +\sum_{i=1}^{m}\int_{0}^{t}g_i(X(r))dW_i(r)\label{eq:true},\end{align}
for $t\in[0,T]$, $T\geq 0$ and $i=1,\dots, m\in\mathbb{N}$, where $W=[W_1,\cdots, W_m]^T$ is an $m$-dimensional Wiener process, the drift coefficient $f: \mathbb{R}^d \rightarrow \mathbb{R}^d$ and the diffusion coefficient $g: \mathbb{R}^d\rightarrow \mathbb{R}^{d\times m}$ each satisfy a local Lipschitz condition along with a polynomial growth condition and, together, a monotone condition. Both are twice continuously differentiable; see Assumption \ref{ass:f+g} and Assumption \ref{ass:SDEmoments_power}. Throughout, we take the initial vector $X(0)=X_0\in\mathbb{R}^d$ to be deterministic.

It was pointed out in \cite{wang2013tamed} that, because the Euler-Maruyama and Euler-Milstein methods coincide in the additive noise case, and as a consequence of the analysis in \cite{hutzenthaler2011strong}, an explicit Milstein scheme over a uniform mesh cannot converge in ${L}_p$ to solutions of \eqref{eq:true}. 
We propose here an adaptive variant of the explicit Milstein method that achieves strong ${L}_2$ convergence of order one to solutions of \eqref{eq:true}. As an immediate consequence of this, in the case of additive noise an adaptive Euler-Maruyama method also has ${L}_2$ convergence  of order one. To prove our convergence result it is essential to introduce a new variant of the admissible class of time-stepping strategies introduced in \cite{kelly2021adaptive,kelly2018adaptive}, which we call path-bounded strategies.

Several variants on the fixed-step Milstein method have been proposed, see for example the tamed Milstein~\cite{wang2013tamed,KumarSabanis2019}, projected and split-step backward Milstein~\cite{beyn2017stochastic}, truncated Milstein~\cite{Guo2018}, implicit Milstein methods~\cite{HighamMaoSzpruch2013,YaoGan18} and a recent tamed stochastic Runge-Kutta (of order one) method of \cite{gan2020tamed}, all designed to converge strongly to solutions of SDEs with more general drift and diffusions, such as in \eqref{eq:true}. However, with few exceptions (see \cite{KumarSabanis2019,beyn2017stochastic}) explicit methods of this kind have only examined the case where the diffusion coefficients $g_i$ satisfies a commutativity condition. We do not impose a commutativity restriction and hence must consider the associated L\'evy areas (see Lemma \ref{lem:levy bound}).

A review of methods that adapt the timestep in order to control local error may be found in the introduction to \cite{kelly2018adaptive}; we cite here ~\cite{Burrage2004,LMS2006,MR3325826,shardlow2016pathwise,Gaines1997,Mauthner1998} and remark that our purpose is instead to handle the nonlinear response of the discrete system see also \cite{fang2016adaptive,fang2020adaptive} and discussion in \cite{kelly2018adaptive,kelly2021adaptive}.
A common feature of the adaptivity is the use of both a minimum and maximum time step where the magnitude of the minimum step is controlled by a free parameter which requires some a-priori knowledge on the part of the user. The approach of 
\cite{fang2016adaptive,fang2020adaptive} was recently extended to McKean-Vlasov equations in \cite{reisinger2020adaptive} and include a Milstein approximation. 
In addition we note the fully adaptive Milstein method proposed in \cite{Hofmann2001} for a scalar SDE with light constraints on the coefficients. There the authors stated that such a method was easy to implement but hard to analyse and as a result 
considered a different, but related method.

Our framework for adaptivity was introduced in \cite{kelly2018adaptive} for an explicit Euler-Maruyama method, and has since been extended to SDE systems with monotone coefficients in \cite{kelly2021adaptive} and to SPDE methods in \cite{campbell2018adaptive}. 
These methods all use a backstop method when the chosen strategy attempts to select a stepsize below the minimum step. 
We demonstrate here, for a path-bounded strategy, that the probability of using the backstop method can be made arbitrarily small by choosing an appropriately large $\rho$, and an appropriately small $h_{\max}$. This is consistent with observation, and with the intuitive notion that the use of the backstop method should be rare in practice (see (e) and (f) of Figure \ref{fig:scalarConv}).

The structure of the article is as follows. Mathematical preliminaries are considered in Section \ref{sec:prelim}, including precise specifications of the conditions imposed on each $f$ and $g_i$, and the characterisation of an explicit Milstein method on an arbitrary mesh. The construction and result of the adaptive time-stepping strategy is outlined in Section \ref{sec:adap}, where we formulate the adaptive Milstein scheme with backstop which will be the subject of our main theorem. Both main results: on strong $L_2$ convergence and on the probability of using the backstop method, are stated in Section \ref{subsec:converg}; we defer their proofs to Section \ref{sec:proof}. 
In Section \ref{sec:num} we compare the adaptive scheme numerically to other fixed step methods and illustrate both convergence and efficiency. The proof of Lemma \ref{lem:levy bound} is in Appendix \ref{sec:appendix}.

\section{Mathematical preliminaries} \label{sec:prelim}

We consider the $d$-dimensional \ito-type SDE \eqref{eq:true} and for the remainder of the article let $(\mathcal{F}_t)_{t\geq 0}$ be the natural filtration of $W$. For all $x \in \mathbb{R}^d$ and for all $\phi(x)\in \mathrm{C}^2(\mathbb{R}^d,\mathbb{R}^d)$, the Jacobian matrix of $\phi(x)$ is denoted $\mathbf{D}\phi(x)\in\mathcal{L}(\mathbb{R}^d,\mathbb{R}^d)$; the second derivative of $\phi(x)$ with respect to a vector $x$ forms a 3-tensor and is denoted $\mathbf{D}^2\phi(x)\in\mathcal{L}(\mathbb{R}^{d\times d},\mathbb{R}^d)$; and $[x]^2:=x\otimes x$ stands for the outer product of $x$ and itself.
Furthermore, let $\|\cdot\|$ denote the standard $l^2$ norm in $\mathbb{R}^d$, $\|\cdot\|_{\mathbf{F}(a\times b)}$ the Frobenious norm of the matrix in $\mathbb{R}^{a\times b}$; for simplicity we write $\|\cdot\|_{\mathbf{F}}$ as the Frobenious norm of the matrix in $\mathbb{R}^{d\times d}$. $\|\cdot\|_{\mathbf{T}_3}$ denotes the induced tensor norm (spectral norm) of the 3-tensor in $\mathbb{R}^{d\times d\times d}$ and it is defined as $\big\|\cdot \big\|_{\mathbf{T}_3}:=\sup_{h_1,h_2\in\mathbb{R}^d, \|h_1\|,\|h_2\|\leq 1}\big\|\cdot(h_1\otimes h_2)\big\|$.
For $a,b\in\mathbb{R}$, $a\vee b$ denotes max$\{a,b\}$ and $a\wedge b$ denotes min$\{a,b\}$. 
We frequently make use of the elementary inequality 
\begin{align}
2ab\leq a^2+b^2,\quad a,b\in\mathbb{R}, \label{eq:ab<a^2+b^2}
\end{align} 
and of the following two standard extensions of Jensen's inequality 
(see \cite[Corollary A.10]{lord2014introduction}).
For $f\in L^1$, if $p\geq 1$, 
\begin{align}
    \Bigg| \int_{0}^t f(s)ds \Bigg|^p \leq t^{p-1}\int_{0}^t |f(s)|^p ds,\quad  t\geq 0. \label{eq:Jen_int}
\end{align}
For $a_i\in\mathbb{R}$ and $p\geq 1$,
\begin{align}
    \Bigg| \sum_{i=1}^{n} a_i \Bigg|^p \leq n^{p-1}\sum_{i=1}^{n} |a_i|^p, \quad n\in\mathbb{N}\backslash \{ 0\}. \label{eq:Jen_sum}
\end{align}
We now present our assumptions on $f$ and $g_i$ in \eqref{eq:true}.
\begin{assumption} \label{ass:f+g}
Let drift $f(x)\in \mathrm{C}^2(\mathbb{R}^d,\mathbb{R}^d)$ and diffusion $g(x)\in\mathrm{C}^2(\mathbb{R}^d,\mathbb{R}^{d\times m})$ with its $i$-th column 
$g_i(x)=[g_{1,i}(x),\dots,g_{d,i}(x)]^T\in\mathrm{C}^2(\mathbb{R}^d,\mathbb{R}^{d})$ for $i=1,\dots,m$. For each $\varkappa \geq 1$  there exist $L_{\varkappa}>0$ such that
\begin{align}
    \big\|f(x)-f(y)\big\|^2+\big\|g(x)-g(y)\big\|^2_{\mathbf{F}(d\times m)}\leq L_{\varkappa}\big\|x-y\big\|^2,  \label{eq:local_lipschitz}
\end{align}
for $x,y\in\mathbb{R}^d$ with $\|x\| \vee \|y\|\leq \varkappa$, and there exists $c\geq 0$ such that for some \Note{$\eta \geq 2$}
\begin{align}
    \big\langle x-y, f(x)-f(y)\big\rangle+\frac{\eta-1}{2} \big\|g(x)-g(y)\big\|^2_{\mathbf{F}(d\times m)} \leq c\big\|x-y\big\|^2.  \label{eq:monotone}
\end{align}
In addition, for some constants $c_{  3,4,5,6  }$, $q_1$, $q_2\geq 0$; $i=1,\dots,m$, we have
\begin{alignat}{2}
   \big \|\mathbf{D}f(x)\big\|_{\mathbf{F}}\leq\,\,& c_3(1+\|x\|^{q_1+1}), \qquad\quad \big\|\mathbf{D}g_i(x)\big\|_{\mathbf{F}}&&\leq\,\, c_4(1+\|x\|^{q_2+1}), \label{eq:Df+Dg}\\
    \big\| f(x)\big\| \leq\,\,& c_5(1+\|x\|^{q_1+2}), \quad\quad \big\| g(x)\big\|_{\mathbf{F}(d\times m)} &&\leq\,\, c_6(1+\|x\|^{q_2+2}).\label{eq:||f||+||g||}
\end{alignat}
Furthermore, for some $c_{1,2}\geq 0$; $i=1,\dots,m$, we have
\begin{align}
    \big\|\mathbf{D}^2f(x)\big\|_{\mathbf{T}_3}\leq\,\,
    c_1(1+\|x\|^{q_1}), \quad \big\|\mathbf{D}^2g_i(x)\big\|_{\mathbf{T}_3}\leq\,\, c_2(1+\|x\|^{q_2}).\label{eq:D^2f+D^2g}
\end{align}
\end{assumption}
Under \eqref{eq:local_lipschitz} and \eqref{eq:monotone}, the SDE \eqref{eq:true} has a unique strong solution on any interval $[0, T ]$, where $T < \infty$ on the filtered probability space $(\Omega , \mathcal{F} , (\mathcal{F}_t )_{t \geq 0} , \mathbb{P})$, see~\cite{Hasminskii}, \cite{mao2007SDEapp} and \cite{tretyakov2013fundamental}. 
\begin{assumption} \label{ass:SDEmoments_power}
Suppose that \eqref{eq:monotone} in Assumption \ref{ass:f+g} holds with 
\begin{align*}
    \eta \geq 4q  + 2q_2+10,
\end{align*}
where $q:=q_1\vee q_2$, $q_1$ and $q_2$ are from \eqref{eq:||f||+||g||} in Assumption \ref{ass:f+g}.
\end{assumption}
% Assumption \ref{ass:SDEmoments_power} ensures that solutions of \eqref{eq:true} have sufficiently many moments of the form shown in the following lemma for the analysis of this paper.
%With Assumption \ref{ass:SDEmoments_power} w
We now give the following Lemma on moments of the solution.
% Moreover the following moment bounds apply over any finite interval $[0,T]$:
\begin{lemma}{\cite[Lem. 4.2]{mao2015truncated}}\label{lem:boundedMomentsSDE}
Let $f$ and $g$ satisfy \eqref{eq:local_lipschitz}, and suppose that Assumption \ref{ass:SDEmoments_power} holds.  If $g$ further satisfies \eqref{eq:||f||+||g||}, then there is a constant $\Cx>0$ such that the solution of \eqref{eq:true} satisfies
\begin{equation}\label{eq:SDEmoments}
\mathbb{E}\biggl[\sup_{s\in[0,T]}\|X(s)\|^{\eta-2q_2-2}\biggr] \leq \Cx.
\end{equation}
\end{lemma}

Next we present the fixed-step Milstein method (see \cite[Sec. 10.3]{kloeden1991numerical}) that is the basis of the adaptive method presented in this article.
% \begin{definition}[Milstein method]
% \label{def:ExplicitMilstein}
% For $n\in\mathbb{N}$, $s\in[t_n, t_{n+1}]$ and given $Y(t_n):=Y_n$, the fixed-step Milstein scheme for \eqref{eq:true}, interpolated over the interval $[t_n,t_{n+1}]$, is given by
% \begin{align}\label{eq:ExplicitMilstein-integral}
% Y(s):=&Y_{n}+f(Y_n)|s-t_n|+\sum_{i=1}^{m}g_i(Y_n)I_{i}^{t_n,s}+\sum_{i,j=1}^{m}\mathbf{D}g_i(Y_n)g_j(Y_n)I_{j,i}^{t_n,s},  
% \end{align}
% where following \cite{wang2013tamed,beyn2017stochastic}, the stochastic integral and the iterated stochastic integral are defined as 
% \begin{align}
%     I_{i}^{t_n,s}:=\int_{t_{n}}^{s}dW_i(r), \qquad I_{j,i}^{t_n,s}:=\int_{t_{n}}^{s} \int_{t_{n}}^{r}dW_j(p) dW_i(r). \label{eq:defISI}
% \end{align}
% \end{definition}
\begin{definition}[Milstein method]
\label{def:ExplicitMilstein}
For $n\in\mathbb{N}$, $s\in[t_n, t_{n+1}]$ and given $Y(t_n)$, the fixed-step Milstein scheme for \eqref{eq:true}, interpolated over the interval $[t_n,t_{n+1}]$, is given by
\begin{multline}\label{eq:ExplicitMilstein-integral}
\qquad Y(s):=Y(t_n)+f\big(Y(t_n)\big)|s-t_n|+\sum_{i=1}^{m}g_i\big(Y(t_n)\big)I_{i}^{t_n,s}\\
+\sum_{i,j=1}^{m}\mathbf{D}g_i\big(Y(t_n)\big)g_j\big(Y(t_n)\big)I_{j,i}^{t_n,s},  
\end{multline}
where following \cite{wang2013tamed,beyn2017stochastic}, the stochastic integral and the iterated stochastic integral are defined as 
\begin{align}
    I_{i}^{t_n,s}:=\int_{t_{n}}^{s}dW_i(r), \qquad I_{j,i}^{t_n,s}:=\int_{t_{n}}^{s} \int_{t_{n}}^{r}dW_j(p) dW_i(r). \label{eq:defISI}
\end{align}
\end{definition}
Expanding the last term in \eqref{eq:ExplicitMilstein-integral} we have that 
% \begin{multline*}
%     \sum_{i,j=1}^{m}\mathbf{D}g_i(Y_n)g_j(Y_n)I_{j,i}^{t_n,s}=\frac{1}{2}\sum_{i=1}^{m}\mathbf{D}g_i(Y_n)g_i(Y_n)\left(\left(I_{i}^{t_n,s}\right)^2-|s-t_n|\right)\\
% +\frac{1}{2}\sum_{\substack{i,j=1\\i<j}}^{m}\Big(\mathbf{D}g_i(Y_n)g_j(Y_n)+\mathbf{D}g_j(Y_n)g_i(Y_n)\Big)I_{i}^{t_n,s}I_{j}^{t_n,s}\\
% +\sum_{\substack{i,j=1\\i<j}}^{m}\Big(\mathbf{D}g_i(Y_n)g_j(Y_n)-\mathbf{D}g_j(Y_n)g_i(Y_n)\Big)A_{ij}^{t_n,s},  \numberthis \label{eq:Dgexpansion}
% \end{multline*}
\begin{align*}
    &\sum_{i,j=1}^{m}\mathbf{D}g_i\big(Y(t_n)\big)g_j\big(Y(t_n)\big)I_{j,i}^{t_n,s}\\
    =&\frac{1}{2}\sum_{i=1}^{m}\mathbf{D}g_i\big(Y(t_n)\big)g_i\big(Y(t_n)\big)\left(\left(I_{i}^{t_n,s}\right)^2-|s-t_n|\right)\\
    &+\frac{1}{2}\sum_{\substack{i,j=1\\i<j}}^{m}\Big(\mathbf{D}g_i\big(Y(t_n)\big)g_j\big(Y(t_n)\big)+\mathbf{D}g_j\big(Y(t_n)\big)g_i\big(Y(t_n)\big)\Big)I_{i}^{t_n,s}I_{j}^{t_n,s}\\
    &+\sum_{\substack{i,j=1\\i<j}}^{m}\Big(\mathbf{D}g_i\big(Y(t_n)\big)g_j\big(Y(t_n)\big)-\mathbf{D}g_j\big(Y(t_n)\big)g_i\big(Y(t_n)\big)\Big)A_{ij}^{t_n,s},  \numberthis \label{eq:Dgexpansion}
\end{align*}
where the term $A_{ij}^{t_n,s}$ is the L\'evy area (see for example \cite[Eq. (1.2.2)]{levy1951wiener}) defined by
\begin{align}
A_{ij}^{t_n,s}:=\frac{1}{2}\left(I_{i,j}^{t_n,s}-I_{j,i}^{t_n,s}\right)\NOTE{,}
\label{def:levy area} 
\end{align}  
\NOTE{and we have used the relations $I_{i,i}^{t_n,s} = \frac{1}{2}( (I_{i}^{t_n,s})^2 - |t-s|)$
and
$I_{i,j}^{t_n,s} + I_{j,i}^{t_n,s} = I_{i}^{t_n,s} I_{j}^{t_n,s}$.} 
As mentioned in the introduction many authors assume the following commutativity condition: suppose that $\mathbf{D}g_i(y)g_j(y)=\mathbf{D}g_j(y)g_i(y)$ for all $i,j=1,\dots, m$ and $y\in\mathbb{R}^d$.
When this holds, the last term
in \eqref{eq:Dgexpansion} vanishes, avoiding the need for any analysis of $A_{ij}^{t_n,s}$ defined in \eqref{def:levy area}. We do not impose such a condition 
in this paper, and therefore make use of the following conditional moment bounds on the L\'evy areas.
\begin{lemma}[\texttt{L\'evy Area}]\label{lem:levy bound}
For all $i,j =1,\dots,m$, $0\leq t_n\leq s<T$ and for a pair of Wiener process $(W_i(r),W_j(r))^T$ where $r\in[t_n,s]$ and the L\'evy area $A_{ij}^{t_n,s}$  defined in \eqref{def:levy area}, there exists a finite constant 
$C_{\texttt{LA}}$ whose explicit form is in \eqref{eq:widehat_Ib} such that for $k\geq 1$ 
\begin{align}
\mathbb{E}\left[\big|A_{ij}^{t_n,s}\big|^k \middle|\mathcal{F}_{t_n}\right]\leq  \Clevy{k}\,|s-t_n|^k \quad a.s.\label{eq:levy moment}
\end{align}
\end{lemma}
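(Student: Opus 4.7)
\textbf{Plan of proof for \Cref{lem:levy bound}.} Since $A_{ij}(t_n,s)$ is a linear combination of the two iterated It\^o integrals $\int_{t_n}^{s}\int_{t_n}^{r}dW_i(p)dW_j(r)$ and $\int_{t_n}^{s}\int_{t_n}^{r}dW_j(p)dW_i(r)$, the triangle inequality together with the elementary bound $(a+b)^b \leq 2^{b-1}(a^b+b^b)$ reduces the problem to establishing a bound of the form $\mathbb{E}[|J_{ij}(t_n,s)|^b \mid \mathcal{F}_{t_n}] \leq C_b |s-t_n|^b$ for the single iterated integral $J_{ij}(t_n,s):=\int_{t_n}^{s}\int_{t_n}^{r}dW_i(p)dW_j(r)$. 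When $i=j$, $J_{ii}(t_n,s)=\tfrac{1}{2}((W_i(s)-W_i(t_n))^2-(s-t_n))$ appears in the symmetric part of the Milstein expansion and cancels in $A_{ij}$, so only the case $i\neq j$ needs separate treatment; in that case $W_i$ and $W_j$ are independent.

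For $i\neq j$, first I would rewrite $J_{ij}(t_n,s)=\int_{t_n}^{s}\bigl(W_i(r)-W_i(t_n)\bigr)\,dW_j(r)$, which is a genuine It\^o integral with an adapted, square-integrable integrand. I would then apply the conditional Burkholder--Davis--Gundy inequality (with conditioning on $\mathcal{F}_{t_n}$) to obtain
\begin{equation*}
\mathbb{E}\!\left[\,|J_{ij}(t_n,s)|^b \,\middle|\, \mathcal{F}_{t_n}\right] \leq C^{\mathrm{BDG}}_b\, \mathbb{E}\!\left[\,\left(\int_{t_n}^{s} (W_i(r)-W_i(t_n))^2\,dr\right)^{\!b/2}\middle|\, \mathcal{F}_{t_n}\right].
\end{equation*}
Next, applying Jensen's inequality (or H\"older with conjugate exponents $b/2$ and $b/(b-2)$ when $b>2$; the case $b\leq 2$ being immediate) to pull the exponent inside the time integral yields
\begin{equation*}
\left(\int_{t_n}^{s} (W_i(r)-W_i(t_n))^2\,dr\right)^{\!b/2} \leq |s-t_n|^{b/2-1}\int_{t_n}^{s} |W_i(r)-W_i(t_n)|^{b}\,dr.
\end{equation*}

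Taking conditional expectations and using the Gaussian moment identity $\mathbb{E}[|W_i(r)-W_i(t_n)|^{b}\mid\mathcal{F}_{t_n}]=\mu_b\,(r-t_n)^{b/2}$ with $\mu_b$ the $b$-th absolute moment of a standard normal, the right-hand side becomes $\mu_b\,|s-t_n|^{b/2-1}\int_{t_n}^{s}(r-t_n)^{b/2}\,dr = \tfrac{\mu_b}{b/2+1}\,|s-t_n|^{b}$. Collecting constants gives the claimed bound with $\widehat{I}_b = 2^{b-1}C^{\mathrm{BDG}}_b\,\mu_b/(b/2+1)$ (modulo the factor of $\tfrac12$ in the definition of $A_{ij}$). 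The only step requiring real care is the first one: the conditional BDG inequality is standard but must be invoked on the shifted Wiener filtration so that the bound is pathwise-valid $\mathbb{P}$-a.s.; the remainder of the argument is routine integration of Gaussian moments and poses no obstacle.
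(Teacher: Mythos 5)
Your proof is correct and takes a genuinely different route from the paper. The paper's argument leans on the explicit distributional theory of the L\'evy area: it invokes L\'evy's characteristic function $\phi(\lambda)=\sech\bigl(\tfrac12|s-t_n|\lambda\bigr)$ conditional on $\mathcal{F}_{t_n}$, Taylor-expands $\sech$ about $\lambda=0$ to express every even moment in terms of Euler numbers, reads off the $b$-th moment by differentiating and letting $\lambda\to 0$, and handles odd $b$ by Cauchy--Schwarz between adjacent even moments. Your argument bypasses the characteristic function entirely: you rewrite the iterated integral as a single It\^o integral $\int_{t_n}^{s}\bigl(W_i(r)-W_i(t_n)\bigr)\,dW_j(r)$ (valid for $i\neq j$; the diagonal case is trivial since $A_{ii}\equiv 0$) and then apply the conditional Burkholder--Davis--Gundy inequality, Jensen's inequality, and Gaussian moments. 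This is more elementary and needs no prior knowledge of the L\'evy-area law; the price is that $\widehat I_b$ inherits the not-fully-explicit BDG constant $C_b^{\mathrm{BDG}}$, whereas the paper's route produces $\widehat I_b$ in closed form via Euler numbers --- a relevant difference here because the paper carries constants explicitly through the convergence estimate. Two small points worth tightening in your writeup: the conditional BDG step is legitimate because the strong Markov property at the stopping time $t_n$ makes $W(t_n+\cdot)-W(t_n)$ a fresh Wiener process independent of $\mathcal{F}_{t_n}$, so BDG applied to that restarted process gives the $\mathcal{F}_{t_n}$-conditional bound $\mathbb{P}$-a.s.\ (this is exactly your ``shifted Wiener filtration'' caveat and deserves a sentence); and the Jensen step as you wrote it requires $b\geq 2$, so the case $b=1$ should be dispatched explicitly by It\^o isometry and Cauchy--Schwarz, as you gesture at.
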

For proof see Appendix \ref{sec:levybound}.
\section{Adaptive time-stepping strategies} \label{sec:adap}

%\Note{Clean up $h_n$ and $n=1,2,3,$ and $h_{n+1}$.}

To deal with the extra terms that arise from Milstein over Euler-Maruyama type discretisations, we introduce a new class of time-stepping strategies in Definition \ref{def:defYn}. Let $\{h_{n+1}\}_{n\in\mathbb{N}}$ be a sequence of strictly positive random timesteps with corresponding random times $\{t_n:=\sum_{i=1}^{n}h_i\}_{n\in\mathbb{N}\backslash \{0\}}$, where $t_0=0$.  
\begin{definition}\label{def:filtration}
Suppose that each member of $\{t_n\}_{n\in\mathbb{N}\backslash \{0\}}$ is an $\mathcal{F}_t$-stopping time: i.e. $\{t_n\leq t\}\in\mathcal{F}_t$ for all $t\geq 0$, where $(\mathcal{F}_t)_{t\geq 0}$ is the natural filtration of $W$. 
\Note{If $\tau$ is any $(\mathcal{F}_t)$-stopping time, then
%The filtration $\{\mathcal{F}_{t}\}_{t\geq 0}$ can be extended
(see \cite[p. 14]{mao2006stochastic}) }
%to any $\mathcal{F}_{t}$-stopping time $\tau$ by
\begin{equation}\label{eq:filtration}
\mathcal{F}_{\tau}:=\{A\in\mathcal{F}\,:\,A\cap\{\tau\leq t\}\in\mathcal{F}_t \Note{,\,\text{ for all }\,t\geq 0}\}.
\end{equation}
In particular this allows us to condition on $\mathcal{F}_{t_n}$ at any point on the random time-set $\{t_n\}_{n\in\mathbb{N}}$.
\end{definition}
\begin{assumption} \label{ass:h}
\Note{For the sequence of random timesteps $\{h_{n+1}\}_{n\in\mathbb{N}}$, there are constant values $\hmax>h_{\min}>0$, $\rho>1$ such that $h_{\max}=\rho  h_{\min}$, and }
%for fixed $1< \rho < \infty$, and 
\begin{equation}\label{eq:hmaxhmin}
0<h_{\min} \leq h_{n+1} \leq h_{\max}\leq 1.
\end{equation}
In addition, we assume each $h_{n+1}$ is $\mathcal{F}_{t_n}$-measurable. 
%\Note{where $\hmin$, $\hmax$ are fixed deterministic values.}
\end{assumption}
\begin{definition}\label{def:N}
Let $N^{(t)}$ be a random integer such that 
 \begin{align}
 N^{(t)}:=\max\{n\in\mathbb{N}\backslash \{ 0\}: t_{n-1}<\Note{t} \},   \label{eq:def_N}
 \end{align}
and let $N=N^{(T)}$ and $t_N=T$, so that $T$ is always the last point on the mesh. Note that $N^{(t)}$ indicates the step number such that $t\in \big[t_{N^{(t)}-1},\,t_{N^{(t)}}\big]$.
Furthermore, by Assumption \ref{ass:h}, $N^{(t)}$ only takes values in the finite set $\{N^{(t)}_{\min},\dots,N^{(t)}_{\max}\}$, where $N^{(t)}_{\min}:=\lfloor t/\hmax \rfloor$ and $N^{(t)}_{\max}:=\lceil t/\hmin \rceil$.
\end{definition}
In Assumption \ref{ass:h}, the lower bound $\hmin$ given by \eqref{eq:hmaxhmin} ensures that a simulation over the interval $[0,T]$ can be completed in a finite number of time steps. In the event that at time $t_n$ our strategy attempts to select a stepsize $h_{n+1} \leq \hmin$, we instead apply a single step of a backstop method ($\varphi$ in Definition \ref{def:adaptive explicit Milstein scheme} below), a known method that satisfies a mean-square consistency requirement with deterministic step $h_{n+1}=\hmin$ (see also discussion in Remarks \ref{rem:com1} \Note{and \ref{rem:lastStep}}).

First we recall the Milstein method expressed as a map. Over each step $[t_n,t_{n+1}]$ the Milstein map $\theta:\mathbb{R}^d\times \mathbb{R}  \times\mathbb{R}\rightarrow \mathbb{R}^d$ is defined as
\begin{align}
\theta\big(x,t_n,s-t_n\big):= x+(s-t_n)f(x) +\sum_{i=1}^{m}g_i(x)I_{i}^{t_n,s}+\sum_{i,j=1}^{m}\mathbf{D}g_i(x)g_j(x) I_{j,i}^{t_n,s}.  \label{eq:deftheta}
\end{align}

Following \cite[Def. 9]{kelly2021adaptive}, we  now define an adaptive Milstein scheme combining the Milstein method and a backstop method. 
\begin{definition}[Adaptive Milstein Scheme] \label{def:adaptive explicit Milstein scheme}
Let  $\{h_{n+1}\}_{n\in \mathbb{N}}$ satisfy Assumption \ref{ass:h}. Using indicator functions to distinguish the backstop case when $h_{n+1}=\hmin$ (and allowing for the possibility that the final step taken to time $T$ is smaller than $h_{\min}$, in which case the backstop is also used), we define the continuous form of an \textit{adaptive Milstein scheme} \Note{associated with a particular time-stepping strategy $\{h_{n+1}\}_{n\in\mathbb{N}}$} %\Note{$\left(\big\{\big(\widetilde Y(s)\big)_{s\in[t_n,t_{n+1}]}\big\}_{n\in\mathbb{N}},\{h_{n}\}_{n\in\mathbb{N}}\right)$}
as 
\begin{multline}
\qquad\widetilde{Y}(s):=\theta\left(\widetilde Y(t_n)\boldsymbol{,}\,\,  t_n\boldsymbol{,}\,\,s-t_n\right) \cdot \mathbf{1}_{\{h_{\min}<h_{n+1}\leq h_{\max}\}}\\
+\varphi\left(\widetilde Y(t_n)\boldsymbol{,}\,\,t_n\boldsymbol{,}\,\,\Note{s-t_n}\right)  \cdot \mathbf{1}_{\{h_{n+1}\NOTE{\leq} h_{\min}\}},  \label{eq:AT}
\end{multline}
for $s\in[t_n,t_{n+1}]$, $n\in\mathbb{N}$,  $\widetilde{Y}(0)=X(0)$, and $\theta$ is as given in \eqref{eq:deftheta}. Thus the scheme is characterised by the sequence of tuples, $\big\{\big(\widetilde Y(s)\big)_{s\in[t_n,t_{n+1}]},h_{n+1}\big\}_{n\in\mathbb{N}}$.
The backstop map $\varphi:\mathbb{R}^d\times \mathbb{R}  \times\mathbb{R} \rightarrow \mathbb{R}^d$ in \eqref{eq:AT} satisfies for each $n \in \mathbb{N}$ 
\begin{multline}
\qquad \mathbb{E}\left[ \left\|X(s)- \varphi\left(\widetilde Y(t_n)\boldsymbol{,}\,\,t_n\boldsymbol{,}\,\, s-t_n\right)\right\|^2 \middle|\mathcal{F}_{t_n} \right] \leq  \left\|X(t_n) -\widetilde Y(t_n)  \right\|^2\\
 + C_{B_1} \int_{t_n}^{s} \mathbb{E}\left[\left\| X(r)-\varphi\left(\widetilde Y(t_n)\boldsymbol{,}\,\,t_n\boldsymbol{,}\,\, \NOTE{r-t_n}\right) \right\|^2 \middle|\mathcal{F}_{t_n}\right]dr+ C_{B_2} h_{\min}^3, \label{eq:defbackstop}
\end{multline}
 a.s, for positive constants $C_{B_1}$ and $C_{B_2}$.
\end{definition}

%Throughout the article, we will make use of the stochastic process which corresponds to the approximation $\widetilde Y(s)$ in \eqref{eq:AT} on each event $\{h_{\min}<h_{n+1}\leq h_{\max}\}$.

%, only when generated by an application of the Milstein map $\theta$.%, and is zero otherwise: 
% \begin{definition}
% \label{def:Ytheta}
% Let $\widetilde{Y}$ be as given in Definition \ref{def:adaptive explicit Milstein scheme} and define
% \begin{equation}\label{eq:Ytheta}
% Y_{\theta}(s):=\widetilde{Y}(s)\mathbf{1}_{\{h_{\min}<h_{n+1}\leq h_{\max}\}},\quad s\in[t_n,t_{n+1}].
% \end{equation}
% \end{definition}
Throughout the article it is notationally convenient to make the following definition.
\begin{definition}
\label{def:Ytheta}
Let $\widetilde{Y}$ be as given in Definition \ref{def:adaptive explicit Milstein scheme} and define for each $n\in\mathbb{N}$
\begin{equation}\label{eq:Ytheta}
Y_{\theta}(s):=\theta\Big(\widetilde{Y}(t_n),t_n,s-t_n\Big),\quad s\in[t_n,t_{n+1}].
\end{equation}
\end{definition}

\begin{remark}\label{rem:com1}
The upper bound $\hmax$ prevents step sizes from becoming too large and allows us to examine strong convergence of the adaptive Milstein method \eqref{eq:AT} to solutions of \eqref{eq:true} as $\hmax \rightarrow 0$ (and hence as $\hmin \rightarrow 0$). Note that $\varphi$ satisfies \eqref{eq:defbackstop} if the backstop method satisfies a mean-square consistency requirement. In practice, instead of testing \eqref{eq:defbackstop}, we choose a backstop method that is strongly convergent with rate 1. 
\end{remark}
\begin{remark} \label{rem:com2}
For all $i=1,2,\dots, m$, $I_{i}^{t_n,t_{n+1}}$ in \eqref{eq:defISI} is a Wiener increment taken over a random step of length $h_{n+1}$ , which itself may depend on $\widetilde Y(t_n)$ and therefore is not necessarily independent and  normally distributed. However, since $h_{n+1}$ is  
$\mathcal{F}_{t_n}$-measurable by Assumption \ref{ass:h}, we have $I_{i}^{t_n,t_{n+1}}$ is $\mathcal{F}_{t_n}$-conditionally normally distributed and by the Optional Sampling Theorem (see for example~\cite{Shiryaev96}), for all $p=0,1,2,\dots$ 
\begin{align*}
\mathbb{E}\left[I_{i}^{t_n,t_{n+1}} \middle|\mathcal{F}_{t_n}\right]&=0,\quad a.s.;\numberthis \label{eq:Wmoments1n2}\\ \mathbb{E}\left[\left|I_{i}^{t_n,t_{n+1}}\right|^2 \middle|\mathcal{F}_{t_n}\right]&=h_{n+1},\quad a.s.;\numberthis\label{eq:WvarCond}\\
\mathbb{E}\left[\left|I_{i}^{t_n,s}\right|^{p} \middle|\mathcal{F}_{t_n}\right]&=\Cp{p}|s-t_n|^{\frac{p}{2}},\quad a.s.; \numberthis \label{eq:even Gaussian_p}
%\mathbb{E}\left[\left|I_{i}^{t_n,s}\right|^{p} \middle|\mathcal{F}_{t_n}\right]&=\underbrace{\frac{2^{p/2}\Gamma\left(\frac{p+1}{2} \right)}{\sqrt{\pi}}}_{=:\Cp{p}}|s-t_n|^{\frac{p}{2}} ,\quad a.s.; \numberthis \label{eq:even Gaussian_p}
\end{align*}
where $\Cp{p}:=2^{p/2}\Gamma\left((p+1)/2 \right)\pi^{-1/2}$, and $\Gamma$ is the Gamma function (see for example \cite[p.148]{Papoulis}). In implementation, it is sufficient to replace the sequence of Wiener increments with i.i.d. $\mathcal{N} (0, 1)$ random
variables scaled at each step by the $\mathcal{F}_{t_n}$-measurable random variable $\sqrt{h_{n+1}}$. 
\end{remark}
%Finally, we construct the time-stepping strategy that guarantees the strong convergence of solutions of \eqref{eq:AT} to solutions of \eqref{eq:true}. %Instead of ensuring that at each step of the discretization the norm of the drift has a pathwise linear bound as for the Euler-Maruyama case in \cite{kelly2018adaptive}, the strategy ensures that each path of the numerical solution is uniformly bounded when the backstop.

We now provide a specific example of  a time-stepping strategy that we use in Section \ref{sec:num} and that satisfies the assumptions for our convergence proof in Theorem \ref{thm:result}. Suppose that for each $n=0,\dots, N-1$ and some fixed constant $\kappa>0$, we choose constant values $h_{\max}>h_{\min}>0$, $\rho>1$ such that $h_{\max}=\rho h_{\min}$ and
\begin{align}
 \NOTE{h_{n+1}=\hmin \vee \left(\hmax \wedge \frac{\hmax}{\big\|\widetilde Y(t_n)\big\|^{1/\kappa}} \right).}
\label{eq:defh}
\end{align}
Then \eqref{eq:hmaxhmin} in Assumption \ref{ass:h} holds for \eqref{eq:defh}. Notice also that, from \eqref{eq:defh}, the following bound applies on the event $\{\hmin < h_{n+1} \leq \hmax\}$:
\begin{align*}
    0\leq \big\|\widetilde Y(t_n)\big\| <  \left(\frac{\hmax}{\hmin}\right)^\kappa = \rho^\kappa.
\end{align*}
%so \eqref{eq:defYn} is satisfied with $Q=0$ and $R=\rho^\kappa$, so that $\{h_{n}\}_{n\in\mathbb{N}}$ is a path-bounded time-stepping strategy for \eqref{eq:AT}.
The strategy given by \eqref{eq:defh} is admissible in the sense given in \cite{kelly2018adaptive,kelly2021adaptive}. However, it also motivates the following class of time-stepping strategies to which our convergence analysis applies. 

\begin{definition}[Path-bounded time-stepping strategies]\label{def:defYn}
Let \Note{$\big\{\widetilde Y(t_n),h_{n+1} \big\}_{n\in\mathbb{N}}$ be a numerical approximation for \eqref{eq:true} given by \eqref{eq:AT}}, \Note{associated with a timestep sequence $\{h_{n+1}\}_{n\in\mathbb{N}}$ satisfying Assumption \ref{ass:h}}. We say that $\{h_{n+1}\}_{n\in\mathbb{N}}$ is a path-bounded time-stepping strategy for \eqref{eq:AT} if 
%Assumption \ref{ass:h} is satisfied and 
there exist real non-negative constants $0\leq Q<R$ (where $R$ may be infinite if $Q\neq 0$)
such that on the event $\{\hmin < h_{n+1} \leq \hmax\}$,
\begin{align}
Q\leq \big\|\widetilde Y(t_n)\big\| < R, \quad n=0,\dots, N-1. \label{eq:defYn}
\end{align}
%\Note{where $\hmin$, $\hmax$ are fixed deterministic values.}
\end{definition}
 Note that throughout this paper we use a strategy where $Q=0$ and $R<\infty$. As we will see in Section \ref{sec:telomere}, a careful choice of the parameter $\kappa$ can be used to minimise invocations of the backstop method when $\rho$ is fixed. 

\section{Main Results}
\label{subsec:converg}
Our first main result shows strong convergence 
with order 1 of solutions of \eqref{eq:AT} to solutions of \eqref{eq:true} when $\{h_{n+1}\}_{n\in\mathbb{N}}$ is a 
path-bounded time-stepping strategy ensuring that \eqref{eq:defYn} holds.
\begin{theorem}[Strong Convergence] \label{thm:result}
Let $(X(t))_{t\in[0,T]}$ be a solution of \eqref{eq:true} with initial value $X(0) = X_0\in\mathbb{R}^d$. Suppose that the conditions of Assumptions \ref{ass:f+g} and \ref{ass:SDEmoments_power} hold.\\ Let $\big\{\big(\widetilde Y(s)\big)_{s\in[t_n,t_{n+1}]},h_{n+1}\big\}_{n\in\mathbb{N}}$ be the adaptive Milstein scheme given in Definition \ref{def:adaptive explicit Milstein scheme} with initial value for the first component $\widetilde Y_0 = X_0$ and path-bounded time-stepping strategy $\{h_{n+1}\}_{n\in\mathbb{N}}$ satisfying the conditions of Definition \ref{def:defYn} for some $R<\infty$. Then there exists a constant  $C(R,\rho,T) > 0$ such that
\begin{align}\label{eq:mainEstimate}
\max_{t\in[0,T]}\Big(\mathbb{E}\Big[\|X(t)-\widetilde Y(t)\|^2\Big]\Big)^{1/2} \leq C(R,\rho,T)\,\hmax.
\end{align}
%Here we obtain strong convergence of order 1 for fixed $R$ and $\rho$. 
Furthermore,
\begin{equation}\label{eq:Clim}
\lim_{\rho\to\infty}C(R,\rho,T)=\infty.
\end{equation}
\end{theorem}
The proof of Theorem \ref{thm:result}, which is given in Section \ref{sec:proof_thm_4.1}, accounts for the properties of the random sequences $\{t_n\}_{n\in\mathbb{N}}$ and $\{h_{n+1}\}_{n\in\mathbb{N}}$ and uses \eqref{eq:defYn} to compensate for the non-Lipschitz drift and diffusion. 

Our second main result shows that for the specific strategy given by \eqref{eq:defh}, the probability of needing a backstop method can be made arbitrarily small by taking $\rho$ sufficiently large with a fixed $\kappa$.
\begin{theorem}[\textbf{Probability of Backstop}]\label{thrm:MarkovIneq}
Let all the conditions of Theorem \ref{thm:result} hold, and suppose that the path-bounded time-stepping strategy $\{h_{n+1}\}_{n\in\mathbb{N}}$ satisfies \eqref{eq:defh}.
Let $C(R,\rho,T)$ be the error constant in estimate \eqref{eq:mainEstimate} from the statement of Theorem \ref{thm:result}. 

For any fixed $\kappa\geq 1$ there exists a constant $C_{\text{prob}}=C_{\text{prob}}(T,R,\hmax)$ 
such that, for $h_{\max}\,\leq\,1/C(R,\rho,T)$,
\begin{equation}\label{eq:Prob_end}
    \prob{h_{n+1}=\hmin} \leq C_{\text{prob}}\,\,\rho^{1-2\kappa}.
\end{equation}
Further for arbitrarily small tolerance $\varepsilon\in(0,1)$, there exists $\rho>0$ such that
   $$\prob{h_{n+1}=\hmin} <\varepsilon,\quad n\in\mathbb{N}.$$
\end{theorem}
For proof see Section \ref{sec:proof_thm_4.2}.

\section{Numerical examples}\label{sec:num}
\begin{remark}\label{rem:lastStep}
We use the adaptive strategy in \eqref{eq:defh}. We ensure that we reach the final time by taking $h_{N}=T-t_{N-1}$ as our final step, and in a situation where this is smaller than $h_{\min}$
%In a situation where the final step to reach time $T$ is smaller than $h_{\min}$, so that $T-t_{N^{(T)}-1}<h_{\min}$, 
we use the backstop method (this is compatible with the proofs below).
\end{remark}

In the numerical experiments below, we set the \textit{adaptive Milstein scheme} (\texttt{AMil}) as in \eqref{eq:AT} with \eqref{eq:defh} as the choice of $h_{n+1}$. \textit{Projected Milstein} (\texttt{PMil})
in \cite[Eq. (24)]{beyn2017stochastic} is set to be the backstop method of \texttt{AMil} and the reference method of all models. Then we compare the strong convergence,looking at the root mean square (RMS) error, and efficiency, by comparing the CPU time, of \texttt{AMil} and \texttt{PMil}, \textit{Split-Step Backward Milstein} method (\texttt{SSBM}) \cite[Eq. (25)]{beyn2017stochastic}, the \textit{new variant of Milstein} (\texttt{TMil}) in \cite{KumarSabanis2019}, and the \textit{Tamed Stochastic Runge-Kutta of order $1.0$} (\texttt{TSRK1}) method \cite[Eq. (3.8) (3.9)]{gan2020tamed}. 
For the non-adaptive schemes, to examine strong convergence, we take as the fixed step $h_{\text{mean}}$
the average of all time steps over each path and each Monte Carlo realization $m =  1,\dots, M$ so that 
%$$h_{\text{mean}}:=\frac{1}{M}\sum_{m=1}^{M}\frac{1}{N{(m)}}\sum_{n=1}^{N{(m)}}h_n{(m)}. 
\Note{$$h_{\text{mean}}:=\frac{1}{M}\sum_{m=1}^{M}\frac{T}{N_{m}},
$$
where $N_{m}$ denotes the number of steps taken on the $m^{th}$ sample path to reach $T$.}

\subsection{One-dimensional test equations with multiplicative and additive noise}\label{sec:numerics1d}
In order to demonstrate strong convergence of order one for a scalar test equation with non-globally Lipschitz drift, consider
\begin{equation}\label{eq:1D_model}
dX(t)=(X(t)-3X(t)^3)dt+G(X(t))dW(t), \quad t\in[0,1].
\end{equation}
For illustrating both the multiplicative and additive noise cases, we estimate the RMS error by a Monte Carlo method using $M=1000$ trajectories for $h_{\max}=[2^{-14}, 2^{-12}, 2^{-10}, 2^{-8}, 2^{-6}]$, $\rho=2^2$, $\kappa=1$, and use as a reference solution \texttt{PMil} over a mesh with uniform step sizes $h_{\text{ref}}=2^{-18}$.

\begin{figure} 
    \centering

    \includegraphics[width=0.48\textwidth]{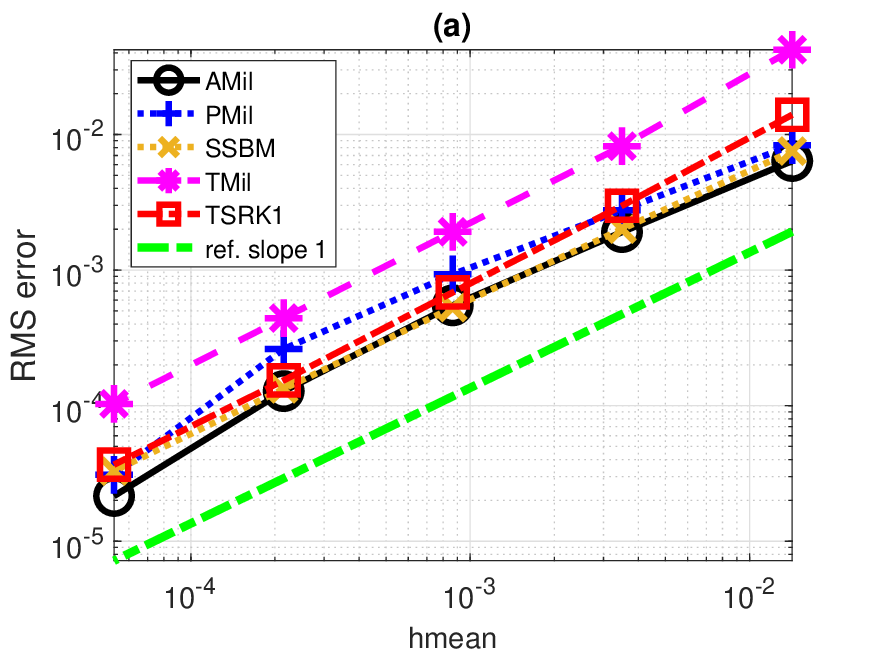}
    \includegraphics[width=0.48\textwidth]{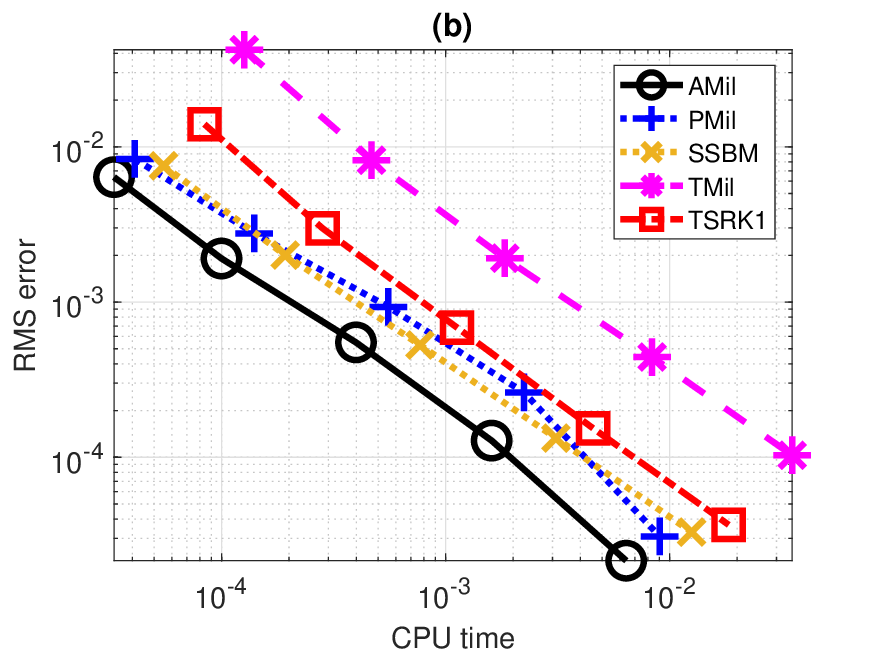}

    \includegraphics[width=0.48\textwidth]{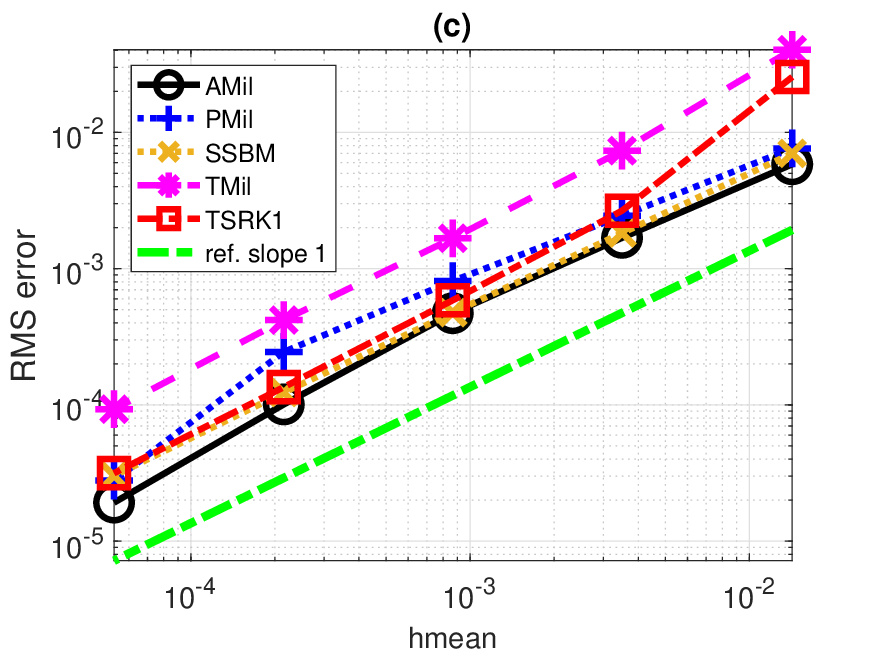}
    \includegraphics[width=0.48\textwidth]{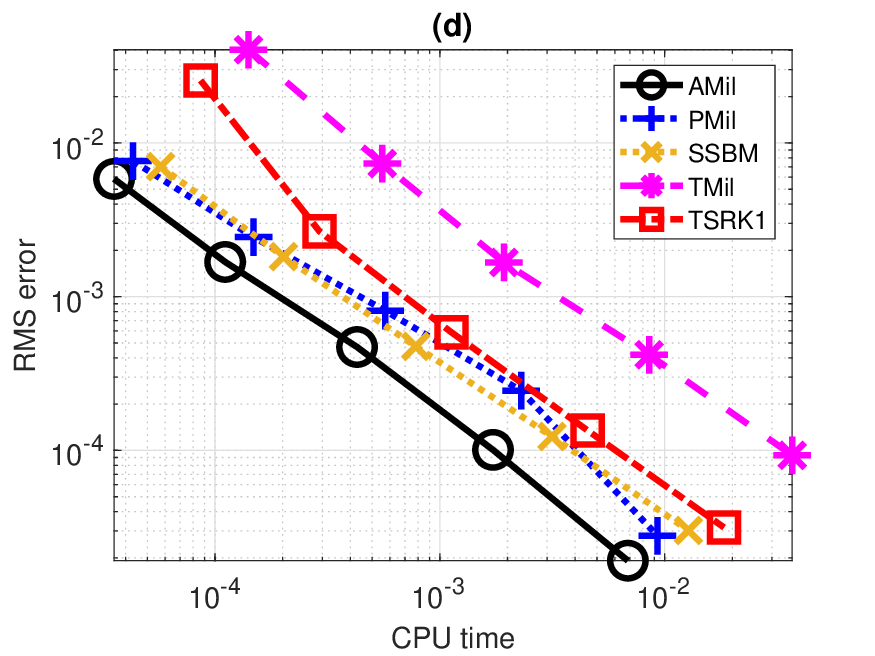}

    \includegraphics[width=0.48\textwidth]{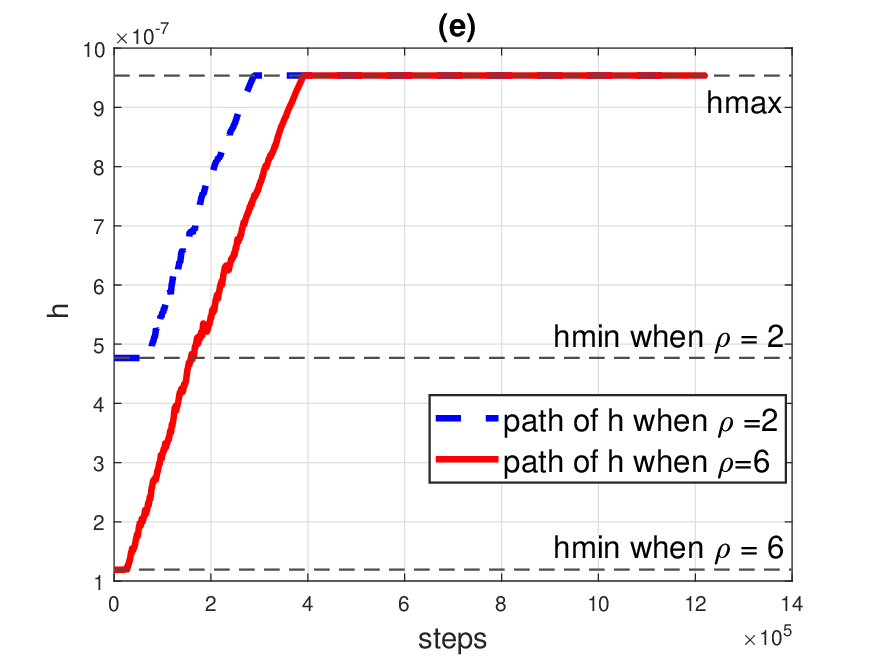}
    \includegraphics[width=0.48\textwidth]{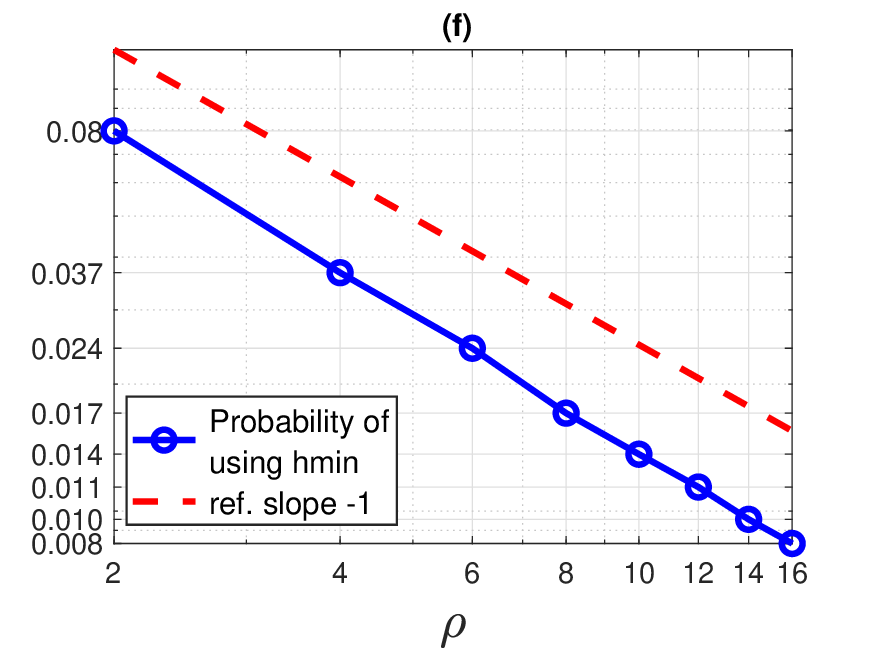}
    \caption{Strong convergence and efficiency of model \eqref{eq:1D_model} with (a) and (b) for additive noise; (c) and (d) for multiplicative noise. (e) Two paths of timestep $h$ for $\rho=2,6$ and in (f) 
    the estimated probability of using $\hmin$ for the multiplicative noise model with $M=100$ realizations.
    }
    \label{fig:scalarConv} 
\end{figure}

For additive noise we set $G(x)=\sigma$ in \eqref{eq:1D_model}, and for  multiplicative noise we set $G(x)=\sigma(1-x^2)$ with $\sigma=0.2$ and $X(0)=11$ in both cases. Strong convergence of order one is displayed by all methods in Figure \ref{fig:scalarConv} part (a) and (c) for the additive and multiplicative cases respectively, with the efficiency displayed in parts (b) and (d).

Finally, consider Theorem \ref{thrm:MarkovIneq}. We illustrate that the probability of our time-stepping strategy selecting $h_{\min}$, and therefore triggering an application of the backstop method, can be made arbitrarily small at every step by an appropriate choice of $\rho$ (with fixed $\kappa=1$).
Consider \eqref{eq:1D_model} again with $G(x)=\sigma(1-x^2)$, this time with
$X(0)=100$, $\kappa, T=1$,   $\hmax=2^{-20}$ and $\rho=[2, 4, \dots, 16]$. 
In Figure \ref{fig:scalarConv} (e), we plot two paths of $h$ when $\rho=2, 6$. Observe that when $\rho=2$ the backstop is triggered only for the first $10^5$ steps approximately, whereas once  $\rho$ is increase to $6$ this is reduced to the first $2\times 10^4$ steps approximately. Estimated probabilities of using $\hmin$ are plotted on a log-log scale as a function of $\rho$ in Figure \ref{fig:scalarConv} (f) (with $M=100$ realizations). The estimated probability of using $h_{\min}$ declines to zero as $\rho$ increases. We observe a rate close to $-1$, matching that in \eqref{eq:Prob_end} with $\kappa=1$.

\subsection{One-dimensional model of telomere shortening} \label{sec:telomere}
The following one-dimensional SDE model was given in \cite[Eq. (A6)]{grasman2011stochastic} for modelling the shortening over time of telomere length $L$ in DNA replication  
\begin{align}
    dL(t) = -\big(c+aL(t)^2\big)dt + \sqrt{\frac{1}{3}aL(t)^3}dW(t). \label{eq:TL}
\end{align}
The parameter $c$ determines the underlying decay rate of the length and $a$ controls the intensity at which random breaks occur in the telomere; we take $(a,c)=(0.41\times 10^{-6},7.5)$ as in \cite{grasman2011stochastic}. In this example we fix $\rho=4$, instead adjusting the parameter $\kappa$ in \eqref{eq:defh} to control use of the backstop method. Individual paths are shown in Figure \ref{fig:TL} where we take $\hmax=2^{-18}$, and $h=2^{-20}$ for the fixed step methods. 

We set $L(0)=1000$, noting from \cite{grasman2011stochastic} that initial values could be as high as (say) $L(0)=6000$ and remain physically realistic. The end of the interval of valid simulation is determined by the first time at which trajectories reach zero, and is therefore random. However this is not observed to occur in the timescale (25 days) we consider here.

By design \texttt{PMil} projects the data onto a ball of radius determined in part by the growth of the drift term. We see in Figure \ref{fig:TL} (a) that (\texttt{PMil}) immediately is reduced to approximately $200$.

Contrarily, the design of \texttt{TMil} scales both drift and diffusion terms by $1/(1+h|L|^2)$ for this model. When $h|L|^2$ is large this scaling can damp out changes from step to step, and in Figure \ref{fig:TL} (a) we see that \texttt{TMil} shows as (spuriously) almost constant. The paths of the other methods, \texttt{AMil}, \texttt{SSBM} and \texttt{TSRK1} are close together as shown in Figure \ref{fig:TL} (a) and in high detail in (b).   

Notice that we used $\kappa=8$ in \eqref{eq:defh} for \texttt{AMil} method to reduce the chance of requiring the backstop method \texttt{PMil} while keeping $\rho=4$. We avoid setting $\kappa=1$ in this case because $L(0)=1000$ and so the adaptive step $h_{n+1}$ would too frequently require the backstop method.

\begin{figure}
    \centering

    \includegraphics[width=0.48\textwidth]{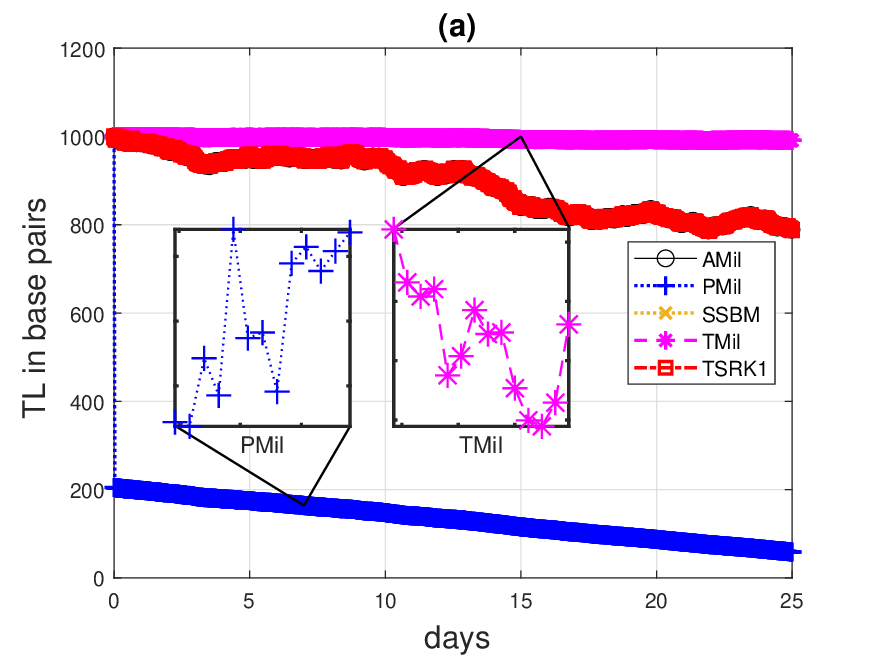}
    \includegraphics[width=0.48\textwidth]{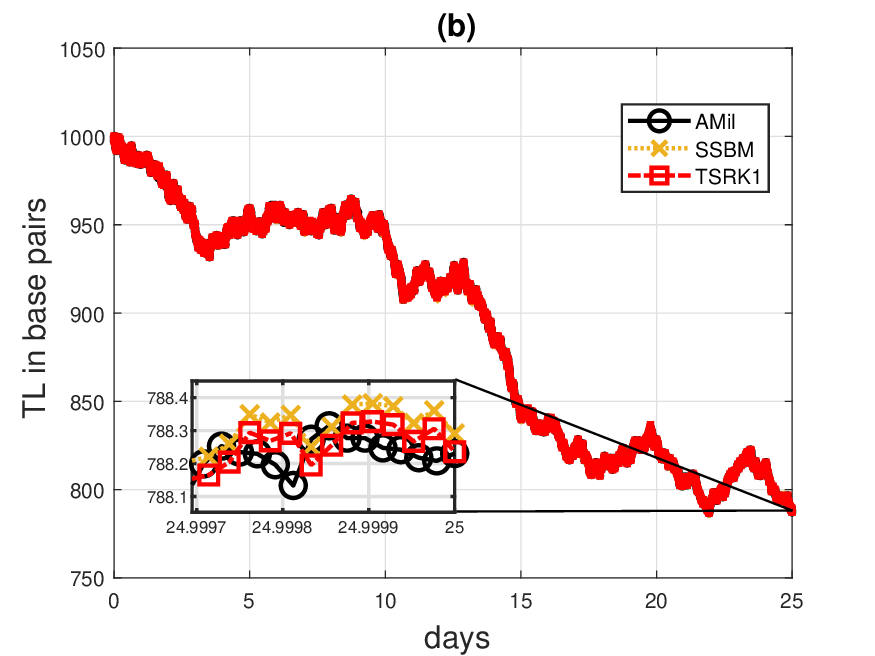}
    
    \caption{Single paths of the Telomere length SDE \eqref{eq:TL} solved over $25$ days. (b) shows a detailed plot from (a). 
    } 
    \label{fig:TL}
\end{figure}

\subsection{Two-dimensional test systems}
We now consider three ($i=1,2,3$) different SDEs: 
\begin{equation}\label{eq:2D_model}
dX(t)=F(X(t))dt+G_i(X(t))dW(t),\quad t\in[0,1],\quad X(0)=[7,9]^T,    
\end{equation}
with $W(t)=[W_1(t),W_2(t)]^T$, where $W_1$ and $W_2$ are independent scalar Wiener processes, $X(t)=[X_1(t),X_2(t)]^T$,  $F(x)=[x_2-3x_1^3,x_1-3x_2^3]^T$, and  
\begin{align*}
G_1(x)=\sigma\begin{pmatrix}x_1^2 &\quad 0\\ 0 &\quad x_2^2\end{pmatrix}, \, 
G_2(x)=\sigma\begin{pmatrix}x_2^2 &\quad x_2^2\\ x_1^2 &\quad x_1^2\end{pmatrix},\,
G_3(x)=\sigma\begin{pmatrix}1.5x_1^2 &\quad x_2\\ x_2^2 &\quad 1.5x_1\end{pmatrix}.
\end{align*}
$G_1$ is an example of diagonal noise, $G_2$  commutative noise, and $G_3$ non-commutative noise.
% We illustrate three cases: diagonal noise ($G\to G_d$), 
% commutative noise ($G\to G_c$) and non-commutative noise, $(G\to G_{nc})$, where

For $G_1$ and $G_2$ we use $\hmax=[2^{-14}, 2^{-12}, 2^{-10}, 2^{-8}, 2^{-6}]$, $h_{\text{ref}}=2^{-18}$, $\rho=4$ and $\kappa=1$. In Figure \ref{fig:2dConv} (a) and (c), we see order one strong convergence for all methods. Parts (b) and (d) show the efficiency of the adaptive method.

For $i=3$, the non-commutative noise case, take $\hmax=[2^{-8}, 2^{-7}, 2^{-6}, 2^{-5}, 2^{-4}]$, $ h_{\text{ref}}=2^{-11}$, $\rho=2^2$ and $X(0)=[3,4]^T$. To simulate the L\'evy areas we follow the method in \cite[Sec. 4.3]{higham2002maple},
which is based on the Euler approximation of a system of SDEs. Again, we observe order one convergence for all methods in Figure \ref{fig:2dConv} (e) and that 
\texttt{AMil} is the most efficient in (f). 
Note that as \texttt{TSRK1} 
is only supported theoretically for commutative noise we do not consider it here.

\begin{figure} 
    \centering
 
    \scalebox{0.91}{\includegraphics[width=0.48\textwidth]{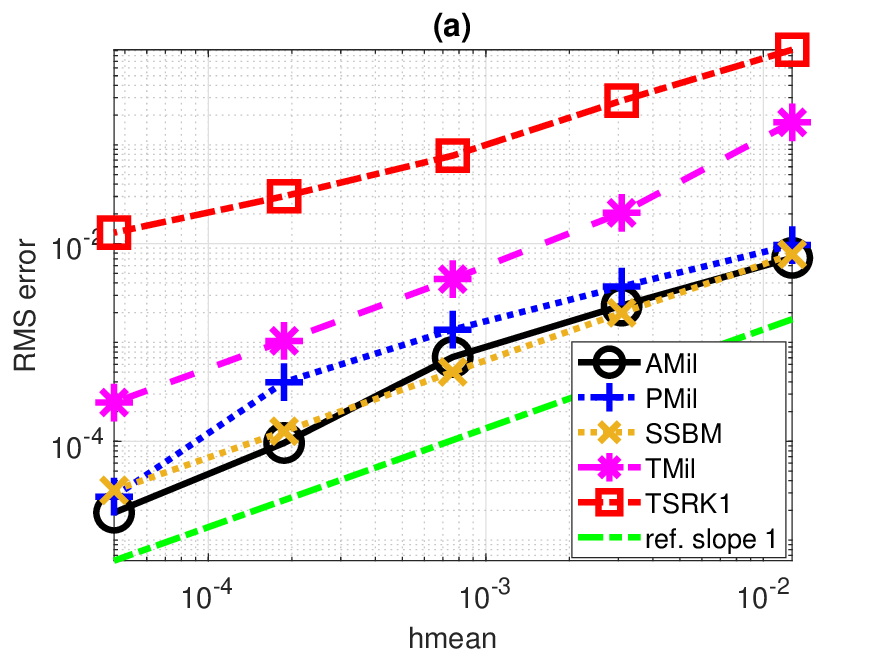}
    \includegraphics[width=0.48\textwidth]{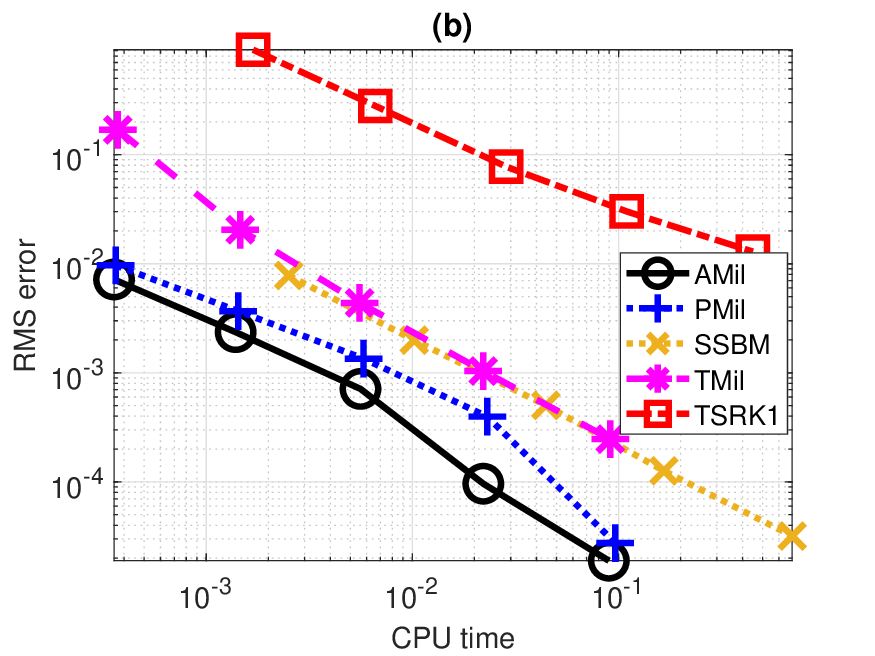}}

    \scalebox{0.91}{\includegraphics[width=0.48\textwidth]{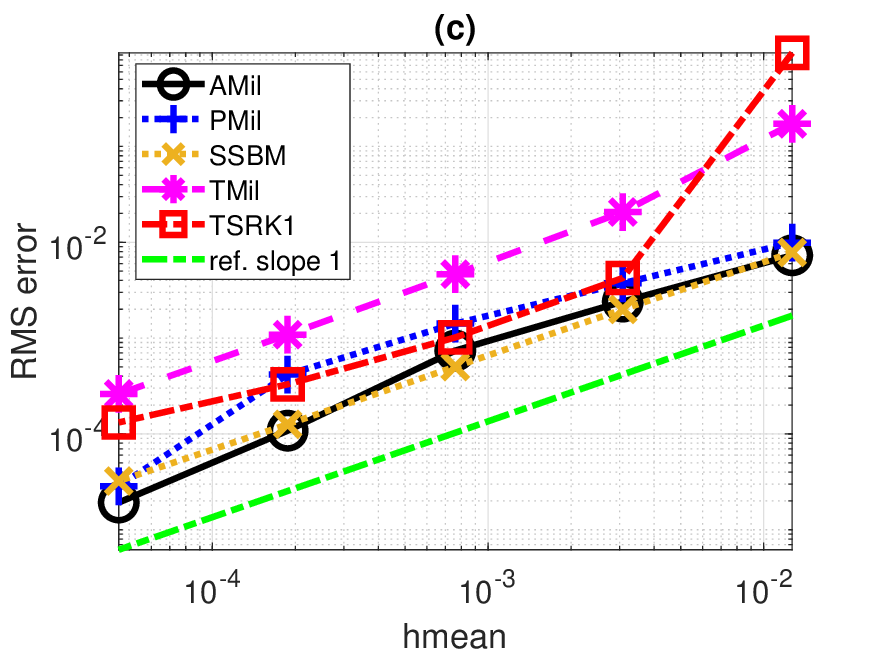}
    \includegraphics[width=0.48\textwidth]{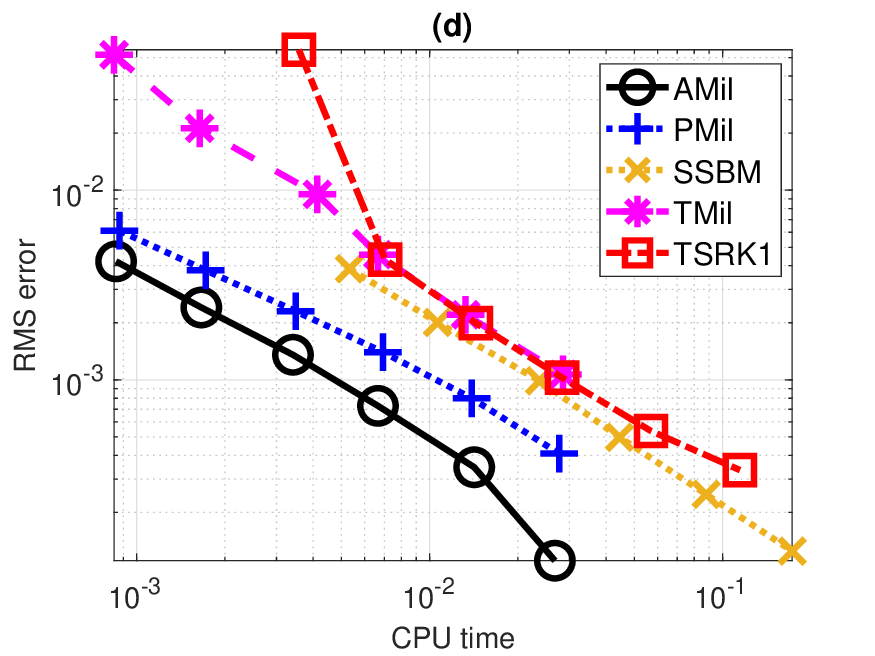}}

    \scalebox{0.91}{\includegraphics[width=0.48\textwidth]{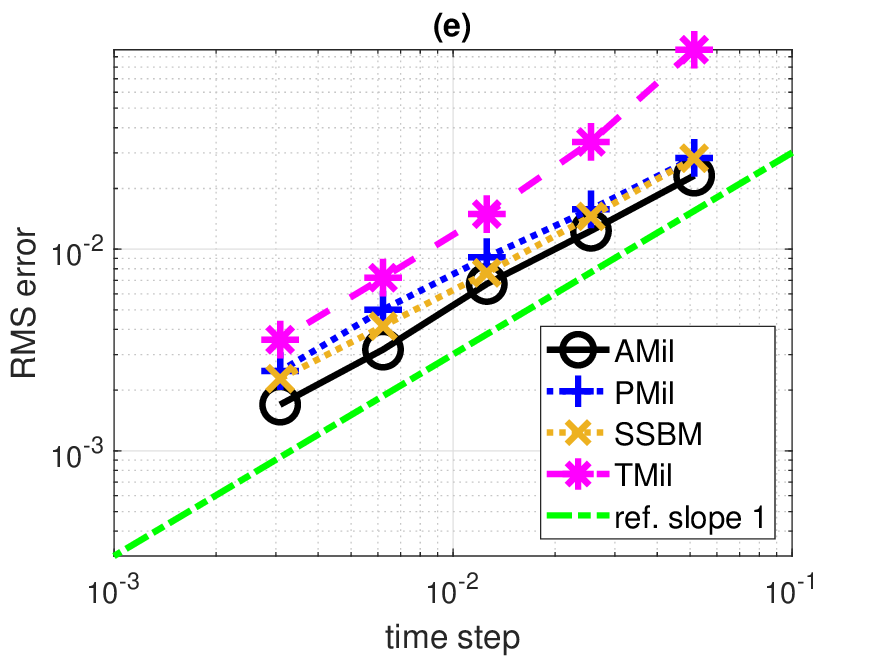}
    \includegraphics[width=0.48\textwidth]{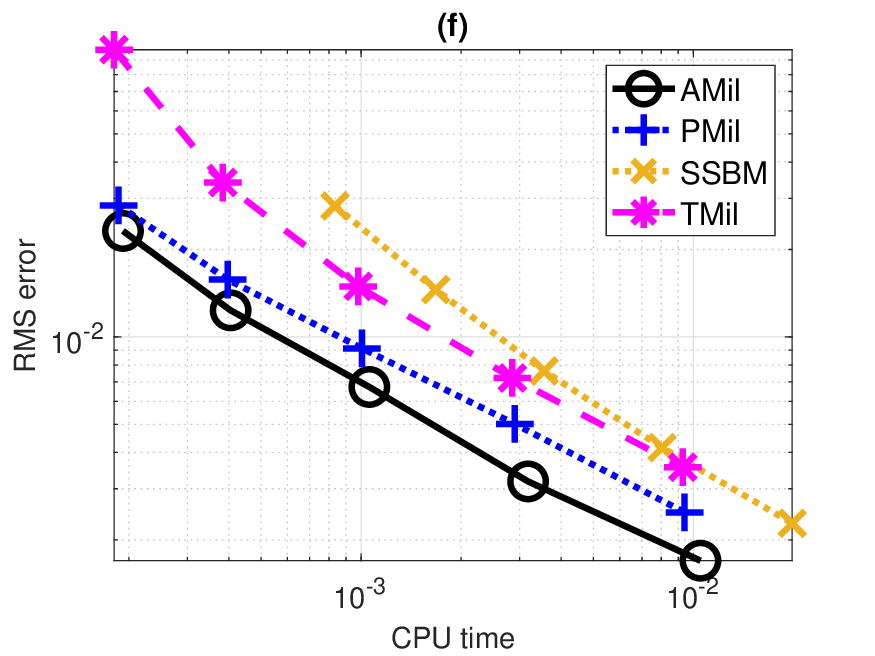}}
    
     \caption{Two-dimensional system \eqref{eq:2D_model}. (a) and (b) show the strong convergence and efficiency for diagonal noise, (c) and (d) with commutative and (e) and (f) for non-commutative noise. We choose $a=3$, $\sigma=0.2$ and  $b=1.5$.} 
    \label{fig:2dConv}
\end{figure}

\section{Preliminary Lemmas} \label{sec:lemmas}
We present five lemmas necessary for the proof of Theorem \ref{thm:result} and Theorem \ref{thrm:MarkovIneq}.
Throughout this section we assume that $f$ and $g$ satisfy Assumptions \ref{ass:f+g} and (except for Lemma \ref{lmm:PR}) that we are on the event $\{\hmin<h_{n+1}\leq\hmax\}$ so that \eqref{eq:defYn} holds \NoteTypo{of Definition \ref{def:defYn}}. We use \eqref{eq:Df+Dg}, \eqref{eq:D^2f+D^2g} and \eqref{eq:||f||+||g||} to define some bounded constant coefficients depending on $R<\infty$. The constant bounds in \eqref{eq:constants define} are then used in the development of a one-step error bound for the adaptive part of the scheme.
\begin{align}\label{eq:constants define}
\begin{split}
\big\|f\big(\widetilde Y(t_n)\big)\big\|  \leq \,& c_5(1+R^{q_1+2})=:C_{f};\\
 \big\|\mathbf{D}f\big(\widetilde Y(t_n)\big)\big\|_{\mathbf{F}}\leq\,& c_3(1+R^{q_1+1})=:C_{Df}; \\
\big\|g_i\big(\widetilde Y(t_n)\big)\big\|\leq\big\|g\big(\widetilde Y(t_n)\big)\big\|_{\mathbf{F}(d\times m)}\leq\,& c_6(1 + R^{q_2+2})=:C_{g_i}; \\
\big\|\mathbf{D}g_i\big(\widetilde Y(t_n)\big)\big\|_{\mathbf{F}} \leq\,& c_4(1 + R^{q_2+1})=:C_{Dg_i}. 
\end{split}
\end{align}

The following lemma provides a bound for the even conditional moments of the iterated stochastic integral in \eqref{eq:defISI}.
\begin{lemma} [\texttt{Iterated Stochastic Integral}]\label{lmm:ISI}
Let $\big\{\big(\widetilde Y(s)\big)_{s\in[t_n,t_{n+1}]},h_{n+1}\big\}_{n\in\mathbb{N}}$ be the adaptive Milstein scheme given in Definitions \ref{def:adaptive explicit Milstein scheme} and \ref{def:defYn}.
Then there exists a constant $C_{\texttt{ISI}}$ such that for $k\geq 1$, $n\in\mathbb{N}$ and $s\in[t_n,t_{n+1}]$, on the event $\{\hmin<h_{n+1}\leq\hmax\}$   
\begin{align}\label{eq:ISI}
    \mathbb{E}\Bigg[\Bigg\|\sum_{i,j=1}^{m}\mathbf{D}g_i\big(\widetilde Y(t_n)\big)g_j\big(\widetilde Y(t_n)\big)I_{j,i}^{t_n,s}\Bigg\|^{2k}\Bigg|\mathcal{F}_{t_n}\Bigg]\leq \Citerated{k,R}|s-t_n|^{2k},
\end{align}
where 
\begin{align*}
       \Citerated{k,R}:=&\,3^{2k} m^{4k}C_{Dg_i}^{2k} C_{g_i}^{2k}\Big( \Cp{4k}+1+\Cp{2k}^2+\Clevy{2k}\Big). \numberthis \label{eq:C_ISI}
\end{align*}
Here, $\Cp{p}$ is from \eqref{eq:even Gaussian_p}, $\Clevy{2k}$ is from Lemma \ref{lem:levy bound} with explicit form given in \eqref{eq:widehat_Ib}, and the $R$ dependence in $\Citerated{k,R}$ arises from \eqref{eq:constants define}.
\end{lemma}
\begin{proof}
First of all, for convenience we set
\begin{align*}
    G_{\texttt{ISI}}(s):=\Bigg\|\sum_{i,j=1}^{m}\mathbf{D}g_i\big(\widetilde Y(t_n)\big)g_j\big(\widetilde Y(t_n)\big)I_{i,j}^{t_n,s}\Bigg\|^{2k}.
\end{align*}
By \eqref{eq:Dgexpansion} and \eqref{eq:Jen_sum}, we have, for $s\in[t_n, t_{n+1}]$ and $n\in\mathbb{N}$,
\begin{multline*}
G_{\texttt{ISI}}(s)\leq 3^{2k-1}\Bigg(\Bigg\|\frac{1}{2}\sum_{i=1}^{m}\mathbf{D}g_i\big(\widetilde Y(t_n)\big)g_i\big(\widetilde Y(t_n)\big)\Big(\big(I_{i}^{t_n,s} \big)^2 -|s-t_n|\Big) \Bigg\|^{2k}\\
+\Bigg\|\frac{1}{2}\sum_{\substack{i,j=1\\i<j}}^{m}\Big(\mathbf{D}g_i\big(\widetilde Y(t_n)\big)g_j\big(\widetilde Y(t_n)\big)+\mathbf{D}g_j\big(\widetilde Y(t_n)\big)g_i\big(\widetilde Y(t_n)\big)\Big)I_{i}^{t_n,s}I_{j}^{t_n,s}\Bigg\|^{2k}\\
+\Bigg\|\sum_{\substack{i,j=1\\i<j}}^{m}\Big(\mathbf{D}g_i\big(\widetilde Y(t_n)\big)g_j\big(\widetilde Y(t_n)\big)-\mathbf{D}g_j\big(\widetilde Y(t_n)\big)g_i\big(\widetilde Y(t_n)\big)\Big)A_{ij}^{t_n,s} \Bigg\|^{2k}\Bigg).
\end{multline*}
Applying \eqref{eq:Jen_sum} again and by submultiplicativity of the Euclidean norm and the fact that the induced matrix 2-norm is bounded above by the Frobenius norm, for $s\in[t_n, t_{n+1}]$ and $n\in\mathbb{N}$, we get
\begin{multline*}
G_{\texttt{ISI}}(s) \leq 3^{2k-1}\Bigg(\frac{ m^{2k-1}}{2^{2k}}\sum_{i=1}^{m}\big\|\mathbf{D}g_i\big(\widetilde Y(t_n)\big)\big\|_{\mathbf{F}}^{2k}\big\|g_i\big(\widetilde Y(t_n)\big)\big\|^{2k}\Big(\big(I_{i}^{t_n,s} \big)^2+|s-t_n| \Big)^{2k}\\
+\left(\frac{ m(m-1)}{2}\right)^{2k-1}\sum_{\substack{i,j=1\\i<j}}^{m}\Big\|\mathbf{D}g_i\big(\widetilde Y(t_n)\big)g_j\big(\widetilde Y(t_n)\big)+\mathbf{D}g_j\big(\widetilde Y(t_n)\big)g_i(Y(t_n)\Big\|^{2k}\\
\times\bigg(\frac{1}{2^{2k}}\Big|I_{i}^{t_n,s}I_{j}^{t_n,s} \Big|^{2k}+\big|A_{ij}^{t_n,s} \big|^{2k}\bigg)\Bigg).
\end{multline*}
Applying conditional expectations on both sides, together with the pairwise conditional independence of $I_{i}^{t_n,s}$ and $I_{j}^{t_n,s}$ for $i\neq j$, \eqref{eq:Df+Dg} and \eqref{eq:constants define}, we have for $s\in[t_n, t_{n+1}]$ and $n\in\mathbb{N}$ 
\begin{multline*}
\mathbb{E}\Big[G_{\texttt{ISI}}(s)\Big|\mathcal{F}_{t_n}\Big] \leq 3^{2k}\Bigg( m^{2k-1} C_{Dg_i}^{2k} C_{g_i}^{2k}\sum_{i=1}^{m}\left(\mathbb{E}\left[\left|I_{i}^{t_n,s}\right|^{4k}\middle|\mathcal{F}_{t_n}\right]+|s-t_n|^{2k}  \right)\\
+\left(\frac{ m(m-1)}{2}\right)^{2k-1}C_{Dg_i}^{2k} C_{g_i}^{2k} \sum_{\substack{i,j=1\\i<j}}^{m} \bigg( \mathbb{E}\left[\left|I_{i}^{t_n,s}\right|^{2k}\middle|\mathcal{F}_{t_n}\right]\mathbb{E}\left[\left|I_{j}^{t_n,s}\right|^{2k}\middle|\mathcal{F}_{t_n}\right]\\
+\mathbb{E}\left[\left|A_{ij}^{t_n,s}\right|^{2k}\middle|\mathcal{F}_{t_n}\right]\bigg)\Bigg).
\end{multline*}
Using \eqref{eq:WvarCond}, \eqref{eq:even Gaussian_p} and \eqref{eq:levy moment} we have 
\begin{align*}
\mathbb{E}\Big[G_{\texttt{ISI}}(s)\Big|\mathcal{F}_{t_n}\Big]\leq& \Citerated{k,R} |s-t_n|^{2k}, 
\end{align*}
where $\Citerated{k,R}$ is in \eqref{eq:C_ISI}.
\end{proof}
The following lemma provides a bound on the conditional moments of the adaptive Milstein scheme in \eqref{eq:AT} over one step, in the case where the method applies the map $\theta$.
\begin{lemma}\label{lem:E[sup Ybar]}
\Note{Consider $\big\{\big(\widetilde Y(s)\big)_{s\in[t_n,t_{n+1}]},h_{n+1}\big\}_{n\in\mathbb{N}}$ from  Definitions \ref{def:adaptive explicit Milstein scheme} and \ref{def:defYn}, and let\\ $(Y_{\theta}(s))_{s\in(t_n,t_{n+1}]}$} be as defined in Definition \eqref{def:Ytheta}. Then there exists a constant $C_{ Y_{\theta}}> 0$ such that for $k\geq 1$, $n\in\mathbb{N}$ and $s\in\Note{(}t_n, t_{n+1}]$, on the event $\{\hmin<h_{n+1}\leq\hmax\}$, 
\begin{align}
 \mathbb{E}\Big[\big\| Y_{\theta}(s)\big\|^k\Big|\mathcal{F}_{t_n}\Big] \leq  C_{ Y_{\theta}}\big(k,R\big), \label{eq:sup Ybar}
\end{align}
where
\begin{align}
    C_{ Y_{\theta}}\big(k,R\big):=\,\,4^{k-1}\Big(R^k+C_f^k+m^k C_{g_i}^k\,\,  \Cp{k}+\Citerated{2k}^{1/2}\Big), \numberthis \label{eq:C_barY}
\end{align}
with the constant $C_{\texttt{ISI}}$ from Lemma \ref{lmm:ISI}.
\end{lemma}
\begin{proof}
By \eqref{eq:Ytheta}, \eqref{eq:deftheta}
and \eqref{eq:Jen_sum}, we have, for $s\in\Note{(}t_n, t_{n+1}]$ and $n\in\mathbb{N}$,
\begin{align*}
    \big\| Y_{\theta}(s)\big\|^k=\,&\left\|\theta\left(\widetilde Y(t_n)\boldsymbol{,}\,\, t_n\boldsymbol{,}\,\, s-t_n\right) \right\|^k \\
    \leq &\,\,4^{k-1}\Bigg(\big\|\widetilde Y(t_n)\big\|^k+\left\|f\big(\widetilde Y(t_n)\big)\right\|^k|s-t_n|^k+\left\|\sum_{i=1}^{m}g_i\big(\widetilde Y(t_n)\big)I_{i}^{t_n,s}\right\|^{k}\\
    &\quad+\Bigg(\Bigg\|\sum_{i,j=1}^{m}\mathbf{D}g_i\big(\widetilde Y(t_n)\big)g_j\big(\widetilde Y(t_n)\big)I_{j,i}^{t_n,s}\Bigg\|^{2k}\Bigg)^{1/2}\Bigg).
\end{align*}
Applying \eqref{eq:Jen_sum}, \eqref{eq:defYn} and \eqref{eq:constants define} for $s\in\Note{(}t_n, t_{n+1}]$ and $n\in\mathbb{N}$, it yields
\begin{align*}
    \big\| Y_{\theta}(s)\big\|^k
    \leq &\,\,4^{k-1}\bigg(R^k+C_f^k |s-t_n|^k+m^{k-1} C_{g_i}^k \sum_{i=1}^{m}\left|I_{i}^{t_n,s}\right|^{k}\\
    &\quad+\Bigg(\Bigg\|\sum_{i,j=1}^{m}\mathbf{D}g_i\big(\widetilde Y(t_n)\big)g_j\big(\widetilde Y(t_n)\big)I_{j,i}^{t_n,s}\Bigg\|^{2k}\Bigg)^{1/2}\Bigg).
\end{align*}
Taking conditional expectation on both sides, with Jensen's inequality on the last term we have for $s\in\Note{(}t_n, t_{n+1}]$ and  $n\in\mathbb{N}$  
\begin{multline*}
\mathbb{E}\Big[\|Y_{\theta}(s)\|^k\Big|\mathcal{F}_{t_n}\Big]\leq\, 4^{k-1}\bigg(R^k+C_f^k|s-t_n|^k+  m^{k-1} C_{g_i}^k\sum_{i=1}^{m}\mathbb{E}\left[\left|I_{i}^{t_n,s}\right|^{k}\middle|\mathcal{F}_{t_n} \right] \\
+\left(\mathbb{E}\Bigg[\Bigg\|\sum_{i,j=1}^{m}\mathbf{D}g_i\big(\widetilde Y(t_n)\big)g_j\big(\widetilde Y(t_n)\big)I_{j,i}^{t_n,s}\Bigg\|^{2k}\Bigg|\mathcal{F}_{t_n}\Bigg]\right)^{1/2}\Bigg).
\end{multline*}
Using \eqref{eq:WvarCond}, \eqref{eq:ISI} from Lemma \ref{lmm:ISI} and since $|s-t_n|\leq h_{\max}\leq 1$ \eqref{eq:hmaxhmin} we have 
\begin{align*}
     \mathbb{E}\Big[\|Y_{\theta}(s)\|^k\Big|\mathcal{F}_{t_n}\Big] \leq C_{Y_{\theta}}(k,R),
\end{align*}
where $C_{Y_{\theta}}(k,R)$ is in \eqref{eq:C_barY}.
\end{proof}
The following lemma
proves regularity in time of the adaptive Milstein scheme in \eqref{eq:AT} when applying the map $\theta$.
\begin{lemma}[\texttt{Scheme Regularity}]\label{lmm:||eror||^2n}
\Note{Consider $\big\{\big(\widetilde Y(s)\big)_{s\in[t_n,t_{n+1}]},h_{n+1}\big\}_{n\in\mathbb{N}}$ in  Definitions \ref{def:adaptive explicit Milstein scheme} and \ref{def:defYn}, and let $(Y_{\theta}(s))_{s\in\Note{(}t_n,t_{n+1}]}$} be as defined in Definition \eqref{def:Ytheta}.
%Suppose that $\big\{\widetilde Y(t_n)\big\}_{n\in\mathbb{N}}$ is a solution of \eqref{eq:AT} with $(Y_{\theta}(s))_{s\in[t_n,t_{n+1}]}$ defined in \eqref{eq:Ytheta}. 
Then there exists a constant $C_{\texttt{SR}}$ such that  for $k\geq 1$, $n\in\mathbb{N}$ and $s\in\Note{(}t_n, t_{n+1}]$, on the event $\{\hmin<h_{n+1}\leq\hmax\}$
\begin{align}
\mathbb{E}\Big[\big\| Y_{\theta}(s)- \widetilde Y(t_n) \big\|^{2k} \Big|\mathcal{F}_{t_n}\Big] \leq \Cscheme{k,R} |s-t_n|^k, \label{eq:||eror||^2n} 
\end{align}
where
\begin{align*}
      \Cscheme{k,R}:= \,\, 3^{2k-1}\Big(C_f^{2k} +m^{2k} C_{g_i}^{2k}\,\,  \Cp{2k} +\Citerated{2k} \Big),\numberthis \label{eq:C_Y_n}
\end{align*}
with the constant $C_{\texttt{ISI}}$ from Lemma \ref{lmm:ISI}.
\end{lemma}
\begin{proof}
The method of proof is
similar to the proof of Lemma \ref{lem:E[sup Ybar]}.%, with cancellation of the constant term in the bound due to $\widetilde{Y}(t_n)$ and the $|s-t_n|^k$ reflecting the regularity of the Brownian path.
\end{proof}

\begin{remark}\label{rem:supScriNot}
\Note{Our analysis requires a certain number of finite moments for the SDE \eqref{eq:true}, and it is necessary to track exactly what those are in order to see that the conditions of Assumption \ref{ass:SDEmoments_power} are not violated. To this end, we introduce a superscript notation for random variables appearing as conditional expectations at this point. The notation should be interpreted according to the following example: in \eqref{eq:C_PR_bar} below the random variable  $C_{\texttt{PR}}^{\{2k(q+2)\}}$ requires $2k(q+2)$ finite moments of the SDE \eqref{eq:true} to have finite expectation.}
\end{remark}

The following lemma examines the regularity of solutions of the SDE \eqref{eq:true}. 
\begin{lemma} [\texttt{Path Regularity}] \label{lmm:PR}
Let $f$, $g$ also satisfy Assumption \ref{ass:SDEmoments_power}, and let $(X(s))_{s\in[t_n,t_{n+1}]}$ be a solution of \eqref{eq:true}. Then there exists an $\mathcal{F}_{t_n}$-measurable random variable $\overline C_{\texttt{PR}}^{\{2k(q+2)\}}$ such that for $k\geq 1$, $n\in\mathbb{N}$ and $s\in[t_n,t_{n+1}]$  a.s.
\begin{align}
     \mathbb{E}\Big[\|X(s)-X(t_n)\|^{2k}\Big|\mathcal{F}_{t_n}\Big] &\,\leq\,\, \CpathBar{2k(q+2)}{k}\,\,|s-t_n|^{k}, \label{eq:PathRegularity_bar}
\end{align}
where $q=q_1\vee  q_2$ is as defined in Assumption \ref{ass:SDEmoments_power}. Where a.s.
\begin{multline*}
    \CpathBar{2k(q+2)}
    :=2^{4k-2}c_5^{2k}\left(1+\mathbb{E}\left[\Note{\sup_{p\in[t_n,t_{n+1}]}}\|X(p)\|^{2k(q_1+2)}\middle|\mathcal{F}_{t_n}\right]\right)\\
    +2^{4k-2}(k(2k-1))^k c_6^{2k}\left(1+\mathbb{E}\left[\Note{\sup_{p\in[t_n,t_{n+1}]}}\|X(p)\|^{2k(q_2+2)}\middle|\mathcal{F}_{t_n}\right]\right) \numberthis\label{eq:C_PR_bar}  
\end{multline*}
 where the expectation of $\overline C_\texttt{PR}^{\{2k(q+2)\}}$ is denoted $\Cpath{k}$, given by
\begin{equation}
\Cpath{k}:=\mathbb{E}\left[\CpathBar{2k(q+2)}{k}\right]\leq 2^{4k-2}\left(1+\Cx \right)\big( c_5^{2k}+(k(2k-1))^k c_6^{2k}\big). \label{eq:E[C_PR_bar]}  
\end{equation}
\end{lemma}
\begin{proof}
The method of proof follows that of \cite[Thm. 7.1]{mao2007SDEapp}. The bound \eqref{eq:E[C_PR_bar]} follows from \eqref{eq:SDEmoments} and Assumption \ref{ass:SDEmoments_power}.
\end{proof}
The following lemma provides a bound on the even conditional moments of the remainder term from a Taylor expansion of either the drift $f$ or diffusion $g$, around $\widetilde{Y}(t_n)$.
\begin{lemma}[\texttt{Taylor Error}] \label{lmm:T01} 
\Note{Consider $\big\{\big(\widetilde Y(s)\big)_{s\in[t_n,t_{n+1}]},h_{n+1}\big\}_{n\in\mathbb{N}}$ from\\ Definitions \ref{def:adaptive explicit Milstein scheme} and \ref{def:defYn}, and let $(Y_{\theta}(s))_{s\in[t_n,t_{n+1}]}$ be as defined in Definition \eqref{def:Ytheta}.}
%Let  $\big\{\widetilde Y(t_n)\big\}_{n\in\mathbb{N}}$  is a solution of \eqref{eq:AT} with $(Y_{\theta}(s))_{s\in[t_n,t_{n+1}]}$ defined in \eqref{eq:Ytheta}. 
Let $u\in \{f, g\}$ 
and set $c_{\mathbf{D}2}:=c_1\vee c_2$. Then there exists a constant $C_{\texttt{TE}}$ such that  for $k\geq 1$, $n\in\mathbb{N}$ and $s\in[t_n, t_{n+1}]$, on the event $\{\hmin<h_{n+1}\leq\hmax\}$,  
\begin{align*}
    \mathbb{E}\left[\Big\|\int_{0}^{1}(1-\epsilon)\mathbf{D}^2u\Big(\widetilde Y(t_n) -\epsilon\big(  Y_{\theta}(s)-\widetilde Y(t_n)\big)\Big)   d\epsilon \Big\|_{\mathbf{T_3}}^{2k}\middle| \mathcal{F}_{t_n}\right]\leq C_{\texttt{TE}}\big(k,R\big ), \numberthis \label{eq:T01}
\end{align*}
where $C_{\texttt{TE}}\left(k,R \right):=c_{\mathbf{D}2}^{2k}\left(1+ 3^{2kq+1}\left(R^{2kq}+ C_{Y_{\theta}}\left(k,R \right)\right) \right)$,
where $C_{Y_{\theta}}\big(k,R\big)$ is from Lemma \ref{lem:E[sup Ybar]}.
\end{lemma}
\begin{proof}
By using \eqref{eq:Jen_int}, \eqref{eq:Jen_sum}, \eqref{eq:D^2f+D^2g}, Lemma \ref{lem:E[sup Ybar]}, \eqref{eq:defYn} and since $c_{\mathbf{D}2}=c_1 \vee c_2$, $q=q_1 \vee q_2$ we have
\begin{align*}
&\mathbb{E}\left[\Big\|\int_{0}^{1}(1-\epsilon)\mathbf{D}^2u\Big(Y(t_n) -\epsilon\big(Y_{\theta}(s)-\widetilde Y(t_n)\big)\Big)   d\epsilon \Big\|_{\mathbf{T_3}}^{2k}\middle| \mathcal{F}_{t_n}\right]\\
\leq&\mathbb{E}\left[\int_{0}^{1}(1-\epsilon)^{2k}\Big\|\mathbf{D}^2u\Big(\widetilde Y(t_n) -\epsilon\big(Y_{\theta}(s)-\widetilde Y(t_n)\big)\Big)\Big\|_{\mathbf{T_3}}^{2k}   d\epsilon \middle| \mathcal{F}_{t_n}\right]\\
\leq& c_{\mathbf{D}2}^{2k}\mathbb{E}\left[\int_{0}^{1}(1-\epsilon)^{2k}\Big(1+\big\|\widetilde Y(t_n)-\epsilon\cdot\big(Y_{\theta}(s)-\widetilde Y(t_n)\big)\big\|^{2kq}\Big) d\epsilon\middle |\mathcal{F}_{t_n}\right] \\
\leq&  c_{\mathbf{D}2}^{2k}\mathbb{E}\Big[1+3^{2kq-1}\big\|\widetilde Y(t_n)\big\|^{2kq}\\
&\qquad\qquad+\int_{0}^{1}(1-\epsilon)^{2k}\epsilon^{2kq}3^{2kq}\left(\|Y_{\theta}(s)\|^{2kq}+\big\|\widetilde Y(t_n)\big\|^{2kq}\right) d\epsilon\Bigg |\mathcal{F}_{t_n}\Bigg]\\
\leq& c_{\mathbf{D}2}^{2k}\left(1+ 3^{2kq+1}\big\|\widetilde Y(t_n)\big\|^{2kq}+3^{2kq}\mathbb{E}\Big[\|Y_{\theta}(s)\|^{2kq} \Big|\mathcal{F}_{t_n}\Big]\right) \\
=& \Ctaylor{k,R},
\end{align*}
where $(1-\epsilon)^{2k}\,\epsilon^{2kq}\leq 1$ for $k,q\geq 1$ and $\epsilon\in[0,1]$.
\end{proof}

\section{Proof of Main Theorems}
\label{sec:proof}
In this section we prove the strong convergence result of Theorem \ref{thm:result} and Theorem \ref{thrm:MarkovIneq} on the probability of using the backstop and the role of $\rho$. 
\subsection{Setting up the error function}
Notice that $\widetilde Y (s)$, from the explicit adaptive Milstein scheme \eqref{eq:AT}, takes either the Milstein map $\theta$ in \eqref{eq:deftheta} or the backstop map $\varphi$ in \eqref{eq:defbackstop} depending on the value of $h_{n+1}$.
%as in \Note{Definition \ref{def:defYn}}. 
Thus, we define the error by
% \begin{align}
%   \widetilde E(s):=\,X(s)-\widetilde Y (s) = E_{\theta}(s)\, \mathbf{1}_{\{h_{\min}<h_{n+1}\leq h_{\max}\}} + E_{\varphi}(s)\, \mathbf{1}_{\{h_{n+1}\leq h_{\min}\}}, \label{eq:defE(r)_combined} 
% \end{align}
\begin{align}
   \widetilde E(s):=\,X(s)-\widetilde Y (s) = E_{\theta}(s) + E_{\varphi}(s), \label{eq:defE(r)_combined} 
\end{align}
for $s\in[t_n, t_{n+1}]$ and $n\in\mathbb{N}$. Here 
\begin{align}
    E_{\varphi}(s):=\left(X(s)- \varphi\left(\widetilde Y(t_n)\boldsymbol{,}\,\,t_n\boldsymbol{,}\,\, \Note{s-t_n}\right)\right)\, \mathbf{1}_{\{h_{n+1}\leq h_{\min}\}}, \label{eq:defE_varphi}
\end{align}
and $Y_{\theta}(s)$ is as defined in Definition  \ref{def:Ytheta} and
\begin{eqnarray}
E_{\theta}(s)&:=& \big(X(s) -  Y_{\theta}(s)\big)\, \mathbf{1}_{\{h_{\min}<h_{n+1}\leq h_{\max}\}}\nonumber\\
&=&\,\left(\widetilde E(t_n)+\int_{t_n}^{s}\Delta f\big(X(r),\widetilde Y(t_n)\big)dr\right.\nonumber\\
&&\left.+\sum_{i=1}^{m}\int_{t_n}^{s}\Delta g_i\big(r,X(r),\widetilde Y(t_n)\big)dW_i(r)\right)\, \mathbf{1}_{\{h_{\min}<h_{n+1}\leq h_{\max}\}}, 
\label{eq:defE(r)}
\end{eqnarray}
with
\begin{align*}
\Delta f\big(X(r),\widetilde  Y(t_n)\big):=\,&f(X(r))- f\big(\widetilde Y(t_n)\big); \numberthis\label{eq:errorF}\\
\Delta g_i\big(r,X(r),\widetilde Y(t_n)\big):=\,&g_i(X(r))-g_i\big(\widetilde Y(t_n)\big)-\sum_{j=1}^{m}\mathbf{D}g_i\big(\widetilde Y(t_n)\big)g_j\big(\widetilde Y(t_n)\big)I_{j}^{t_n,r}. \numberthis \label{eq:errorG}
\end{align*}
To simplify the proofs of Theorems \ref{thm:result} and \ref{thrm:MarkovIneq}, we require two Lemma  below. 
First, we find the second-moment bound of $\Delta g_i$ in \eqref{eq:errorG} on the event $\{\hmin<h_{n+1}\leq\hmax\}$ (so that \eqref{eq:defYn} holds).
\begin{lemma} \label{lmm:Gr}
Let $g$ satisfy Assumption \ref{ass:f+g} and $\Delta g_i$ be as in \eqref{eq:errorG}. Take $s\in[t_n,t_{n+1}]$, let 
$X(s)$
be a solution of \eqref{eq:true}, \Note{consider $\big(\widetilde Y(s),h_{n+1}\big)$ from  Definitions \ref{def:adaptive explicit Milstein scheme} and \ref{def:defYn}, and let $Y_{\theta}(s)$} be as defined in Definition \ref{def:Ytheta}. 
In this case there exists a constant $C_{G}$ such that, on the event $\{h_{\min}<h_{n+1}\leq h_{\max}\}$, 
\begin{multline*}
    \qquad\mathbb{E}\left[\left\|\Delta g_i\big(s,X(s),\widetilde Y(t_n)\big) \right\|^2\middle|\mathcal{F}_{t_n}\right]\\
    \leq  2\mathbb{E}\Big[\big\|g(X(s))-g\big(Y_{\theta}(s)\big)\big\|_{\mathbf{F}(d\times m)}^2 \Big|\mathcal{F}_{t_n}\Big] +C_{G}(R)  |s-t_n|^2, \numberthis \label{eq:lmmGr}
\end{multline*}
where 
\begin{align*}
    C_{G}(R) :=\,8 C_{Dg_i}^2 \big(C_f^2+ \Citerated{1,R}\big) +4\Ctaylor{2,R}^{1/2} \Cscheme{4,R}^{1/2}, \numberthis \label{eq:C_Gr} 
\end{align*}
and $C_{\texttt{ISI}}$, $C_{\texttt{TE}}$ and $C_{\texttt{SR}}$ are from Lemma \ref{lmm:ISI}, \ref{lmm:T01} and \ref{lmm:||eror||^2n}, respectively.
\end{lemma}
\begin{proof}
We first substitute $\Delta g_i$ by \eqref{eq:errorG} in the LHS of \eqref{eq:lmmGr}, then add in and subtract out $g_i\big( Y_{\theta}(s)\big)$,
by \eqref{eq:Jen_sum} we have
\begin{multline}
    \qquad\mathbb{E}\left[\left\|\Delta g_i\big(s,X(s),\widetilde Y(t_n)\big) \right\|^2\middle|\mathcal{F}_{t_n}\right]
    \leq  2\mathbb{E}\Big[\Big\|g_i(X(s))-g_i\big( Y_{\theta}(s)\big)\Big\|^2 \Big|\mathcal{F}_{t_n}\Big] \\
     +\underbrace{2\mathbb{E}\Bigg[\Bigg\|g_i\big( Y_{\theta}(s)\big)-g_i\big(\widetilde Y(t_n)\big)-\sum_{j=1}^{m}\mathbf{D}g_i\big(\widetilde Y(t_n)\big)g_j\big(\widetilde Y(t_n)\big)I_{j}^{t_n,s} \Bigg\|^2 \Bigg|\mathcal{F}_{t_n}\Bigg]}_{=: G_{1}}.\label{eq:G_1}
\end{multline}
To analyse $G_{1}$, we expand $g_i ( Y_{\theta}(s) )$ using Taylor's theorem (see for example \cite[A.1]{lord2014introduction}) around $g_i\big(\widetilde Y(t_n)\big)$ to get
\begin{multline}
    \qquad g_i\big( Y_{\theta}(s)\big)-g_i\big(\widetilde Y(t_n)\big) =\, \mathbf{D}g_i\big(\widetilde Y(t_n)\big)\big(  Y_{\theta}(s)-\widetilde Y(t_n)\big)\\
    +\int_{0}^{1}(1-\epsilon)\mathbf{D}^2g_i\Big(\widetilde Y(t_n)-\epsilon\big(  Y_{\theta}(s) -\widetilde Y(t_n)\big)\Big)\Big[  Y_{\theta}(s)-\widetilde Y(t_n)\Big]^2d\epsilon, \label{eq:TaylorG}
\end{multline}
where we recall from Section \ref{sec:prelim} that $[\cdot]^2$ represents the outer product of a vector with itself.
Substituting \eqref{eq:TaylorG} into $G_{1}$ in \eqref{eq:G_1}, then taking out $\mathbf{D}g_i\big(\widetilde Y(t_n)\big)$ as a common factor, and applying \eqref{eq:Jen_sum} gives 
\begin{align*}
    G_{1}\,\leq & \,\,4 \mathbb{E}\Bigg[\Bigg\|\mathbf{D}g_i\big(\widetilde Y(t_n)\big)\bigg(  Y_{\theta}(s)-\widetilde Y(t_n)-\sum_{j=1}^{m}g_j\big(\widetilde Y(t_n)\big)I_{j}^{t_n,s}\bigg)\Bigg\|^2\Bigg|\mathcal{F}_{t_n}\Bigg] \\
    &\quad+4  \mathbb{E}\Bigg[\Bigg\|\int_{0}^{1}(1-\epsilon)\mathbf{D}^2g_i\Big(\widetilde Y(t_n)-\epsilon\big(  Y_{\theta}(s)-\widetilde Y(t_n)\big)\Big) \Big[  Y_{\theta}(s)-\widetilde Y(t_n)\Big]^2d\epsilon \Bigg\|^2 \Bigg|\mathcal{F}_{t_n}\Bigg] \\
    =:&\,\, G_{1.1}+G_{1.2}. \numberthis\label{eq:G_1_aim}
\end{align*}
For $G_{1.1}$ in \eqref{eq:G_1_aim}, by submultiplicativity of the Euclidean norm and the fact that the induced matrix 2-norm is bounded above by the Frobenius norm; by \eqref{eq:Ytheta}, \eqref{eq:constants define} and \eqref{eq:ISI} in the statement of Lemma \ref{lmm:ISI} with $k=1$, we have
\begin{align*}
    G_{1.1}
     \,\leq\,\,&  8\Big\|\mathbf{D}g_i\big(\widetilde Y(t_n)\big)\Big\|_{\mathbf{F}}^2\Bigg(\Big\|f\big(\widetilde Y(t_n)\big)\Big\|^2|s-t_n|^2\\
     &\qquad\qquad+\mathbb{E}\Bigg[\Bigg\|\sum_{i,j=1}^{m}\mathbf{D}g_i\big(\widetilde Y(t_n)\big)g_j\big(\widetilde Y(t_n)\big)I_{j,i}^{t_n,s}\Bigg\|^2\Bigg|\mathcal{F}_{t_n}\Bigg]\Bigg)\\
     \,\leq\,\,& 8 C_{Dg_i}^2\big(C_f^2+ \Citerated{1,R}\big)|s-t_n|^2.\numberthis\label{eq:G_11}  
\end{align*}
For $G_{1.2}$ in \eqref{eq:G_1_aim}, we apply \eqref{eq:Jen_int}, 
the Cauchy-Schwarz inequality, then using \eqref{eq:T01} in Lemma \ref{lmm:T01} with $k=2$ and \eqref{eq:||eror||^2n} in Lemma \ref{lmm:||eror||^2n} with $k=4$ we get
\begin{align*}
    G_{1.2} \,\leq\,\,& 4  \int_{0}^{1}\Big(\mathbb{E}\Big[\Big\|(1-\epsilon)\mathbf{D}^2g_i\Big(\widetilde Y(t_n)-\epsilon\big(  Y_{\theta}(s)-\widetilde Y(t_n)\big)\Big)\Big\|_{\mathbf{T_3}}^4  \Big|\mathcal{F}_{t_n}\Big]\Big)^{1/2}d\epsilon\\
    &\qquad\qquad\qquad\qquad\qquad\times \Big(\mathbb{E}\Big[\Big\|  Y_{\theta}(s)-\widetilde Y(t_n) \Big\|^8 \Big|\mathcal{F}_{t_n}\Big]\Big)^{1/2} \\
    \,\leq\,\,& 4\Ctaylor{2,R}^{1/2}  \Cscheme{4,R}^{1/2} \,|s-t_n|^2.\numberthis\label{eq:G_12}
\end{align*}
Substituting the bounds \eqref{eq:G_11} and \eqref{eq:G_12} back to  \eqref{eq:G_1_aim} before bringing together the terms in \eqref{eq:G_1}, we have
\begin{align*}
     \mathbb{E}\left[\left\|\Delta g_i\big(s,X(s),\widetilde Y(t_n)\big) \right\|^2\middle|\mathcal{F}_{t_n}\right] \leq  2\mathbb{E}\Big[\Big\|g_i(X(s))-g_i\big( Y_{\theta}(s)\big)\Big\|^2 \Big|\mathcal{F}_{t_n}\Big]  +C_{G }(R)  |s-t_n|^2,
\end{align*}
with $C_G(R) $ in \eqref{eq:C_Gr}. By bounding $\|g_i\|^2$ with $\|g\|_{\mathbf{F}(d\times m)}^2$, the statement of Lemma \ref{lmm:Gr} follows.
\end{proof}
The second lemma in the following gives the conditional second-moment bound of $E_{\theta}(s)$ as in \eqref{eq:defE(r)}, which is the first part of the one-step error in \eqref{eq:defE(r)_combined}.
\begin{lemma}  \label{lmm:CaseI}
Let $f$, $g$ satisfy Assumption \ref{ass:f+g} and \ref{ass:SDEmoments_power}. Let $X(s)$ be a solution of \eqref{eq:true} and 
\NOTE{$\widetilde E(s)$ be given by \eqref{eq:defE(r)_combined}}
% $\widetilde E(s)$  a solution of \eqref{eq:defE(r)_combined}
with $E_{\theta}(s)$ defined in \eqref{eq:defE(r)}, with $s\in[t_n,t_{n+1}]$, $n\in\mathbb{N}$. 
%When $\hmin<h_{n+1}\leq\hmax$ 
In this case there exists a constant $C_E$ and an $\mathcal{F}_{t_n}$-measurable random variable $\overline C^{\{4(q+2)\}}_{M}$ such that 
\begin{multline*}
    \mathbb{E}\Big[\big\|  E_{\theta}(t_{n+1})\big\|^2 \Big|\mathcal{F}_{t_n}\Big] \leq\,\, \big\|\widetilde E(t_{n})\big\|^2+C_E(R)\int_{t_n}^{t_{n+1}} \mathbb{E}\left[\big\|E_{\theta}(r)\big\|^2 \middle|\mathcal{F}_{t_n}\right]dr\\
    +\overline C_M^{\{4(q+2)\}}(R)\,h_{n+1}^3, \quad a.s. \numberthis \label{eq:Case_I}
\end{multline*}
where 
\begin{align}
    C_E(R) :=\, 2K_1(R)+2c, \label{eq:C_E}
\end{align}
with constant $K_1$ as defined in \eqref{eq:K1}. The $\mathcal{F}_{t_n}$-measurable random variable $\overline C^{\{4(q+2)\}}_M$ is given by 
\begin{align*}
    \overline C_M^{\{4(q+2)\}}(R)\,:=\,m^4 C^2_{Df} C^2_{g_i}+2\overline K_2^{\{4(q+2)\}}+mC_{G}(R) , \numberthis \label{eq:C_h_bar}
\end{align*}
with the $\mathcal{F}_{t_n}$-measurable random variable $\overline K^{\{4(q+2)\}}_2$ in \eqref{eq:K_2_bar}, constant $C_{G}$ in Lemma \ref{lmm:Gr}.
Denote $\expect{\overline C^{\{4(q+2)\}}_M(R)}=:C_M(R)$, the finiteness of which is ensured in \eqref{eq:E[C_h]}.
\end{lemma}
We recall that the superscript notation in 
\eqref{eq:C_h_bar} follows the convention introduced in the statement of Lemma \ref{lmm:PR} and indicates the number of finite moments required of the SDE solution (see Remark \ref{rem:supScriNot}).
\begin{proof}
Throughout the proof, we restrict attention to trajectories on the event $\{h_{\min}<h_{n+1}\leq h_{\max}\}$, since by \eqref{eq:defE(r)}, $E_\theta(s)$ is only nonzero on this event, otherwise \eqref{eq:Case_I} holds trivially. Applying the \Note{stopping time variant of}
\ito formula \Note{(see Mao \& Yuan~\cite{mao2006stochastic})}
to \eqref{eq:defE(r)}, 
we have,
\begin{multline*}
\big\|E_{\theta}(t_{n+1})\big\|^2=\big\|\widetilde E(t_{n})\big\|^2+2\int_{t_n}^{t_{n+1}}\underbrace{\Big\langle E_{\theta}(r),\Delta f\big(X(r),\widetilde Y(t_n)\big) \Big\rangle}_{=:J_f} dr\\
+\sum_{i=1}^{m}\int_{t_n}^{t_{n+1}}\Note{\Big\|\underbrace{\Delta g_i\big(r,X(r),\widetilde Y(t_n)\big)}_{=:J_{g_i}}\Big\|^2} dr+2\sum_{i=1}^{m}\int_{t_n}^{t_{n+1}}\big\langle E_{\theta}(r),\Note{J_{g_i}}\big\rangle dW_i(r).
\end{multline*}
Take expectations on both sides conditional upon $\mathcal{F}_{t_n}$, and since
$\int_{t_n}^{t_{n+1}}\big|J_f\big| dr$ 
has finite expectation (by the boundedness of $\widetilde Y(t_n)$ in \eqref{eq:defYn} and the finiteness of absolute moments of $X(r)$ see \eqref{eq:SDEmoments}), using Fubini's Theorem (see for example \cite[Proposition 12.10]{Dineen2011}) and \eqref{eq:Wmoments1n2} we have, 
\begin{multline}
\qquad \mathbb{E}\Big[\big\|E_{\theta}(t_{n+1})\big\|^2 \Big|\mathcal{F}_{t_n}\Big] = \big\|\widetilde E(t_{n})\big\|^2 +2\int_{t_n}^{t_{n+1}}\mathbb{E}\big[J_f\big|\mathcal{F}_{t_n}\big]dr\\
+\sum_{i=1}^{m}\int_{t_n}^{t_{n+1}}\mathbb{E}\big[\Note{\|}J_{g_i}\Note{\|^2}\big|\mathcal{F}_{t_n}\big]dr,   \label{eq:err+ito}  
\end{multline}
By Lemma \ref{lmm:Gr}, we have the bound of $\Note{\|}J_{g_i}\Note{\|^2}$ in \eqref{eq:err+ito} as
\begin{align}
    \mathbb{E}\big[\Note{\|}J_{g_i}\Note{\|^2}\big|\mathcal{F}_{t_n}\big]
    \leq\, 2\mathbb{E}\Big[\big\|g(X(r))-g(Y_{\theta}(r))\big\|_{\mathbf{F}(d\times m)}^2 \Big|\mathcal{F}_{t_n}\Big] +C_{G}(R) |r-t_n|^2.  \label{eq:E[E_g]}
\end{align}
For $J_f$, by substituting $\Delta f$ with \eqref{eq:errorF} with adding in and subtracting out $f(Y_{\theta}(r))$, we have
\begin{align}
J_f=\Big\langle E_{\theta}(r),f(X(r))-f(Y_{\theta}(r))\Big\rangle +\underbrace{\Big\langle E_{\theta}(r),f(Y_{\theta}(r))-f\big(\widetilde Y(t_n)\big)\Big\rangle}_{=:H}. \label{eq:E_f}
\end{align}
Substituting \eqref{eq:E_f} and \eqref{eq:E[E_g]} back into \eqref{eq:err+ito}, we have
\begin{multline*}
\qquad \mathbb{E}\Big[\big\|E_{\theta}(t_{n+1})\big\|^2 \Big|\mathcal{F}_{t_n}\Big] \leq \big\|\widetilde E(t_{n})\big\|^2+mC_{G}(R) h_{n+1}^3 \\
+2\int_{t_n}^{t_{n+1}}\mathbb{E}\big[J_{f,g}\big|\mathcal{F}_{t_n}\big]dr+2\int_{t_n}^{t_{n+1}}\mathbb{E}\big[H\big|\mathcal{F}_{t_n}\big]dr, \numberthis \label{eq:err+H}
\end{multline*}
where 
\begin{align}
    J_{f,g}:=\Big\langle E_{\theta}(r),f(X(r))-f(Y_{\theta}(r))\Big\rangle+\big\|g(X(r))-g(Y_{\theta}(r))\big\|_{\mathbf{F}(d\times m)}^2. \label{eq:J_fg}
\end{align}
For $H$ in \eqref{eq:E_f}, and in a similar way to \eqref{eq:TaylorG}, we expand $f(Y_{\theta}(r))$ using Taylor's theorem around $\widetilde Y(t_n)$ to have
\begin{multline}
\qquad f(Y_{\theta}(r))-f\big(\widetilde Y(t_n)\big)=\mathbf{D}f\big(\widetilde Y(t_n)\big)\big(Y_{\theta}(r)-\widetilde Y(t_n)\big)\\+\int_{0}^{1}(1-\epsilon)\mathbf{D}^2f\Big(\widetilde Y(t_n)-\epsilon\cdot\big(Y_{\theta}(r)-\widetilde Y(t_n)\big)\Big)\Big[Y_{\theta}(r)-\widetilde Y(t_n)\Big]^2d\epsilon.  \label{eq:Rf}
\end{multline}
Then we substitute $Y_{\theta}(r)$ in the first term on the RHS of \eqref{eq:Rf} with \eqref{eq:deftheta} where we use the expanded form of the map as characterised in  \eqref{eq:Dgexpansion} for $s=r$.
Therefore, for the last term on the RHS of  \eqref{eq:err+H}, we have
\begin{align}
\mathbb{E}\big[H\big|\mathcal{F}_{t_n}\big]\,\leq\,  H_1+H_2+H_3+H_4+H_5+H_6, \label{eq:E[H]}
\end{align}
where
\begin{align*}
    H_1 :=&\,\,\mathbb{E}\Big[\Big<E_{\theta}(r)\boldsymbol{,}\,\, \mathbf{D}f\big(\widetilde Y(t_n)\big)|r-t_n|f\big(\widetilde Y(t_n)\big)\Big>\Big|\mathcal{F}_{t_n}\Big]; \\
    H_2 :=&\,\,\mathbb{E}\bigg[\bigg<E_{\theta}(r)\boldsymbol{,}\,\, \underbrace{\sum_{i=1}^{m}\mathbf{D}f\big(\widetilde Y(t_n)\big)g_i\big(\widetilde Y(t_n)\big)I_{i}^{t_n,r}}_{=:H_{2R}} \bigg>\bigg|\mathcal{F}_{t_n} \bigg];\\
    H_{3}:=&\,\,\mathbb{E}\Bigg[\Bigg\langle E_{\theta}(r),\,\, \frac{1}{2}\sum_{i=1}^{m}\mathbf{D}f\big(\widetilde Y(t_n)\big)\mathbf{D}g_i\big(\widetilde Y(t_n)\big)g_i\big(\widetilde Y(t_n)\big)\\
    &\qquad\qquad\qquad\qquad\qquad\qquad\times \left(\left(I_{i}^{t_n,r}\right)^2-|r-t_n|\right) \Bigg\rangle\Bigg|\mathcal{F}_{t_n} \Bigg];\\
    H_{4}:=&\,\,\mathbb{E}\Bigg[\Bigg\langle E_{\theta}(r),\,\,\frac{1}{2}\sum_{\substack{i,j=1\\i<j}}^{m}\mathbf{D}f\big(\widetilde Y(t_n)\big)\Big(\mathbf{D}g_i\big(\widetilde Y(t_n)\big)g_j\big(\widetilde Y(t_n)\big)\\
    &\quad\qquad\qquad\qquad+\mathbf{D}g_j\big(\widetilde Y(t_n)\big)g_i\big(\widetilde Y(t_n)\big)\Big) I_{i}^{t_n,r}I_{j}^{t_n,r} \Bigg\rangle\Bigg|\mathcal{F}_{t_n} \Bigg];\\
    H_{5}:=&\,\,\mathbb{E}\Bigg[ \Bigg\langle E_{\theta}(r),\,\, \sum_{\substack{i,j=1\\i<j}}^{m}\mathbf{D}f\big(\widetilde Y(t_n)\big)\Big(\mathbf{D}g_i\big(\widetilde Y(t_n)\big)g_j\big(\widetilde Y(t_n)\big)\\
    &\qquad\qquad\qquad\quad-\mathbf{D}g_j\big(\widetilde Y(t_n)\big)g_i\big(\widetilde Y(t_n)\big)\Big)A_{ij}(t_n, r)\Bigg\rangle\Bigg|\mathcal{F}_{t_n} \Bigg];\\
    H_{6}:=&\,\,\mathbb{E}\bigg[\bigg<E_{\theta}(r),\,\, \int_{0}^{1}(1-\epsilon)\mathbf{D}^2f\Big(\widetilde Y(t_n)-\epsilon\cdot\big(Y_{\theta}(r)-\widetilde Y(t_n)\big)\Big)\\
    &\qquad\qquad\qquad\qquad\qquad\qquad\quad\times\Big[Y_{\theta}(r)-\widetilde Y(t_n)\Big]^2d\epsilon\bigg>\bigg|\mathcal{F}_{t_n} \bigg].
\end{align*}
We will now determine suitable upper bounds for each of $H_1$, $H_2$, $H_3$, $H_4$, $H_5$, and $H_6$ in turn. For $H_1$ in \eqref{eq:E[H]}, by the Cauchy-Schwarz inequality, \eqref{eq:ab<a^2+b^2}, and \eqref{eq:constants define}, we have
\begin{align*}
H_1\leq&\,\,\mathbb{E}\Big[\|E_{\theta}(r)\|\, \big\|\mathbf{D}f\big(\widetilde Y(t_n)\big)\big\|_{\mathbf{F}}\,\big\|f\big(\widetilde Y(t_n)\big)\big\|\,|r-t_n|\,\Big|\mathcal{F}_{t_n}\Big]\\
\leq&\,\,\mathbb{E}\left[\frac{1}{2}\big\| \mathbf{D}f\big(\widetilde Y(t_n)\big)\big\|_{\mathbf{F}}^2\,\big\|f\big(\widetilde Y(t_n)\big)\big\|^2\|E_{\theta}(r)\|^2+\frac{1}{2}|r-t_n|^2\,\middle|\mathcal{F}_{t_n}\right]\\
\leq&\,\,\frac{1}{2} C_{Df}^2 C_{f}^2\,\,\mathbb{E}\Big[\|E_{\theta}(r)\|^2\Big|\mathcal{F}_{t_n}\Big]+\frac{1}{2}|r-t_n|^2.  \numberthis \label{eq:H_1}
\end{align*}
Next, for the analysis of  $H_2$ in \eqref{eq:E[H]}, by \eqref{eq:Wmoments1n2}, we firstly have
\begin{align}
    \mathbb{E}\big[H_{2R}\big|\mathcal{F}_{t_n} \big]=\sum_{i=1}^{m}\mathbf{D}f\big(\widetilde Y(t_n)\big)g_i\big(\widetilde Y(t_n)\big)\mathbb{E}\Big[I_{i}^{t_n,r}\Big|\mathcal{F}_{t_n} \Big] =0. \label{eq:H_2R_exp}
\end{align}
By \eqref{eq:Jen_sum}, the Cauchy-Schwarz inequality, \eqref{eq:constants define} and \eqref{eq:WvarCond} we also have 
\begin{align*}
    \mathbb{E}\Big[\big\|H_{2R}\big\|^2\Big|\mathcal{F}_{t_n} \Big] \leq\,\,& m\sum_{i=1}^{m}\big\|\mathbf{D}f\big(\widetilde Y(t_n)\big)\big\|_{\mathbf{F}}^2 \big\|g_i\big(\widetilde Y(t_n)\big)\big\|^2 \mathbb{E}\left[\left|I_{i}^{t_n,r} \right|^2 \middle|\mathcal{F}_{t_n} \right]\\
    \leq\,\,&m^2 C_{Df}^2 C_{g_i}^2|r-t_n|. \numberthis \label{eq:H_2R_var}
\end{align*}
Then, for $H_2$ in \eqref{eq:E[H]} we firstly expand $E_{\theta}(r)$ using \eqref{eq:defE(r)} to have
\begin{align*}
H_2=&\, \mathbb{E}\left[\left<\widetilde E(t_n)\boldsymbol{,}\,\, H_{2R} \right> \middle|\mathcal{F}_{t_n} \right] +\mathbb{E}\left[\left<\int_{t_n}^{r}\Delta f(X(p),Y(t_n))dp\boldsymbol{,}\,\, H_{2R}\right> \middle|\mathcal{F}_{t_n} \right]\\
&\quad\qquad+\mathbb{E}\left[\left<\sum_{i=1}^{m}\int_{t_n}^{r}\Delta g_i\big(p,X(p),\widetilde Y(t_n)\big)dW_{i}(p)\boldsymbol{,}\,\, H_{2R} \right> \middle|\mathcal{F}_{t_n} \right]\\
=:&\,\, H_{2.1}+H_{2.2}+H_{2.3}. \numberthis \label{eq:H_2}
\end{align*}
For $H_{2.1}$ in \eqref{eq:H_2}, by \eqref{eq:H_2R_exp} we have
\begin{align*}
    H_{2.1}=\left<\widetilde E(t_n)\boldsymbol{,}\,\, \mathbb{E}\big[H_{2R}\big|\mathcal{F}_{t_n} \big] \right> =0. \numberthis\label{eq:H_2.1}
\end{align*}
For $H_{2.2}$ in \eqref{eq:H_2}, by adding in and subtracting out $f(X(t_n))$ in $\Delta f$ in \eqref{eq:errorF}:
\begin{align*}
    H_{2.2}=&\,\, \mathbb{E}\Bigg[\Big<\int_{t_n}^{r} f(X(r))-f(X(t_n)) dp\boldsymbol{,}\,\, H_{2R} \Big> \Bigg|\mathcal{F}_{t_n} \Bigg]\\
    &\qquad+\mathbb{E}\Bigg[\Big<\int_{t_n}^{r} f(X(t_n))-f\big(\widetilde Y(t_n)\big) dp\boldsymbol{,}\,\, H_{2R}\Big> \Bigg|\mathcal{F}_{t_n} \Bigg]\\
    =:&\,\, H_{2.21}+H_{2.22}.\numberthis\label{eq:H_2.2}
\end{align*}
Similar to $H_{2.1}$ in \eqref{eq:H_2.1}, we have $H_{2.22} = 0$. For $H_{2.21}$ in \eqref{eq:H_2.2}, using the Cauchy-Schwarz inequality and \eqref{eq:H_2R_var} we have 
\begin{align*}
    H_{2.21}\leq&\,\, \mathbb{E}\left[\left\|\int_{t_n}^{r} f(X(p))-f(X(t_n)) dp\right\| \big\|H_{2R}\big\| \middle|\mathcal{F}_{t_n} \right]\\
    \leq& \Bigg(|r-t_n|\int_{t_n}^r\mathbb{E}\left[ \left\|f(X(p))-f(X(t_n)) \right\|^2 \middle|\mathcal{F}_{t_n} \right]dp\,\,\mathbb{E}\Big[\big\|H_{2R}\big\|^2\Big|\mathcal{F}_{t_n} \Big]\Bigg)^{1/2}\\
    \leq& \,\, m C_{Df} C_{g_i}|r-t_n|\left(\int_{t_n}^r\mathbb{E}\left[ \left\|f(X(p))-f(X(t_n)) \right\|^2\middle|\mathcal{F}_{t_n} \right]dp\right)^{1/2}. \numberthis\label{eq:H_2.21}
\end{align*}
By Taylor expansion of $f(X(p))$ around $f(X(t_n))$ to first order, and using \eqref{eq:Df+Dg}, the Cauchy-Schwarz inequality, Lemma \ref{lmm:PR} with $k=2$ and \eqref{eq:Jen_sum}, we have
\begin{align*}
     &\,\,\mathbb{E}\left[\big\|f(X(p))-f(X(t_n)) \big\|^2\middle|\mathcal{F}_{t_n} \right]\\
      =&\,\,\mathbb{E}\left[\left\|\int_{0}^{1}\mathbf{D}f(X(t_n)-\epsilon\cdot(X(p)-X(t_n))(X(p)-X(t_n))d\epsilon\right\|^2\middle|\mathcal{F}_{t_n} \right]\\
      \leq & \Big(\mathbb{E}\left[\|X(p)-X(t_n)\|^4\middle|\mathcal{F}_{t_n} \right] \Big)^{1/2}\\
     & \qquad\qquad\times\left(\mathbb{E}\left[\left\|\int_{0}^{1}\mathbf{D}f\big(X(t_n)-\epsilon\cdot(X(p)-X(t_n)\big)d\epsilon\right\|_{\mathbf{F}}^4\middle|\mathcal{F}_{t_n} \right] \right)^{1/2}\\
     \leq& \,\,\overline C^{\{4(q+2)\}}_{H2.21}|p-t_n|, \numberthis\label{eq:H_2.21_PR}
\end{align*}
where 
\begin{align}
     \overline C^{\{4(q+2)\}}_{H2.21}:= \,\,c_3^2\left(\CpathBar{4(q+2)}{2,X(p)} \left(1+3^{4q_1+4}\mathbb{E} \Big[\sup_{p\in[t_n,t_{n+1}]}\|X(p)\|^{4q_1+4}\Big|\mathcal{F}_{t_n} \Big]\right)\right)^{1/2} .   \label{eq:C_H221_bar}
\end{align}
Substituting \eqref{eq:H_2.21_PR} back to \eqref{eq:H_2.21} and using that $H_{2.22}=0$, we have
\begin{align*}
    H_{2.2} \leq\,\,& m C_{Df} C_{g_i} \left(\overline  C^{\{4(q+2)\}}_{H2.21}\right)^{1/2}|r-t_n|^2. \numberthis \label{eq:H_2.2_end}
\end{align*}
For $H_{2.3}$ as in \eqref{eq:H_2}, by Cauchy-Schwarz inequality, \eqref{eq:Jen_sum}, \eqref{eq:constants define}, \eqref{eq:WvarCond} and \ito's isometry we have    
\begin{align*}
    H_{2.3}\leq& \left(\mathbb{E}\left[\left\|\sum_{i=1}^{m}\int_{t_n}^{r}\Delta g_i\big(p,X(p),\widetilde Y(t_n)\big)dW_{i}(p)\right\|^2\middle|\mathcal{F}_{t_n} \right]\right)^{1/2}\\
    &\qquad\qquad\quad\times \left(\mathbb{E}\left[\left\|\sum_{i=1}^{m}\mathbf{D}f\big(\widetilde Y(t_n)\big)g_i\big(\widetilde Y(t_n)\big)I_{i}^{t_n,r} \right\|^2 \middle|\mathcal{F}_{t_n} \right]\right)^{1/2}\\
    \leq& \left(m\sum_{i=1}^{m}\int_{t_n}^{r}\mathbb{E}\left[\left\|\Delta g_i\big(p,X(p),\widetilde Y(t_n)\big) \right\|^2\middle|\mathcal{F}_{t_n} \right]dp\right)^{1/2}
    m C_{Df} C_{g_i} |r-t_n|^{1/2}.
\end{align*}
Then, by Lemma \ref{lmm:Gr} we have 
\begin{multline*}
    \qquad H_{2.3} \leq\bigg(2m^2\int_{t_n}^{r} \mathbb{E}\Big[\big\|g(X(p))-g(Y_{\theta}(p))\big\|_{\mathbf{F}(d\times m)}^2 \Big|\mathcal{F}_{t_n}\Big]dp \\+C_{G}(R)  |r-t_n|^3\bigg)^{1/2}m C_{Df} C_{g_i} |r-t_n|^{1/2}.
\end{multline*}
Since the integrand $\mathbb{E}\Big[\big\|g(X(p))-g(Y_{\theta}(p))\big\|_{\mathbf{F}(d\times m)}^2 \Big|\mathcal{F}_{t_n}\Big]$ is non-negative for all $p\in[t_n,t_{n+1}]$, we can replace the upper limit of integration with $t_{n+1}$.
With $\sqrt{a+b}\leq\sqrt{a}+\sqrt{b}$, we have 
\begin{multline}
    H_{2.3}
    \leq \sqrt{2}m^2 C_{Df} C_{g_i}|r-t_n|^{1/2} \left(\int_{t_n}^{t_{n+1}} \mathbb{E}\Big[\big\|g(X(r))-g(Y_{\theta}(r))\big\|_{\mathbf{F}(d\times m)}^2 \Big|\mathcal{F}_{t_n}\Big]dr\right)^{1/2}\\
    +m C_{Df} C_{g_i}C_{G}(R)^{1/2} |r-t_n|^{2}. \label{eq:H_2.3} 
\end{multline}
Notice that we changed the variable of integration from $p$ back to $r$ for consistency. Substituting  \eqref{eq:H_2.1}, 
\eqref{eq:H_2.2_end} and \eqref{eq:H_2.3} back into \eqref{eq:H_2}, we have
\begin{multline}
    H_{2} \leq \,\,m C_{Df} C_{g_i} \Bigg(\left(\overline C^{\{4(q+2)\}}_{H2.21}\right)^{1/2}+C_{G}(R)^{1/2} \Bigg)|r-t_n|^2\\
    +\sqrt{2}m^2 C_{Df} C_{g_i}|r-t_n|^{1/2} \left(\int_{t_n}^{t_{n+1}} \mathbb{E}\Big[\big\|g(X(r))-g(Y_{\theta}(r))\big\|_{\mathbf{F}(d\times m)}^2 \Big|\mathcal{F}_{t_n}\Big]dr\right)^{1/2}.   \label{eq:H_2_end}
\end{multline}
For $H_{3}$ in \eqref{eq:E[H]}, by the Cauchy-Schwarz inequality, triangle inequality,  \eqref{eq:Jen_sum}, \eqref{eq:ab<a^2+b^2}, \eqref{eq:even Gaussian_p}, \eqref{eq:Df+Dg} and \eqref{eq:constants define}, we have 
\begin{align*}
H_{3}\leq\,\,&\mathbb{E}\Bigg[\frac{1}{4}\sum_{i=1}^{m}\Bigg(  \big\|\mathbf{D}f\big(\widetilde Y(t_n)\big)\big\|_{\mathbf{F}}^2\,\big\|\mathbf{D}g_i\big(\widetilde Y(t_n)\big)\big\|_{\mathbf{F}}^2\,\big\|g_i\big(\widetilde Y(t_n)\big)\big\|^2\|E_{\theta}(r)\|^2 \\
&\qquad\qquad\qquad\qquad\qquad\qquad\qquad + 2\left|I_{i}^{t_n,r}\right|^4+ 2|r-t_n|^2\Bigg) \Bigg|\mathcal{F}_{t_n} \Bigg]\\
\leq\,\,&\frac{m}{4} C_{Df}^2 C_{Dg_i}^2  C_{g_i}^2\,\,\mathbb{E}\Big[\|E_{\theta}(r)\|^2\Big|\mathcal{F}_{t_n}\Big]+ \frac{(\Cp{4}+1)m}{2}|r-t_n|^2. \numberthis\label{eq:H_3}
\end{align*}
For $H_{4}$ in \eqref{eq:E[H]}, by the Cauchy-Schwarz inequality, conditional independence of the \ito integrals, \eqref{eq:WvarCond}, triangle inequality, \eqref{eq:ab<a^2+b^2}, \ito's isometry, \eqref{eq:Df+Dg}, and \eqref{eq:constants define}, we have 
\begin{align*}
H_{4}\leq\,\,&\mathbb{E}\Bigg[\frac{1}{2}\sum_{\substack{i,j=1\\i<j}}^{m}\|E_{\theta}(r)\| \big\|\mathbf{D}f\big(\widetilde Y(t_n)\big)\big\|_{\mathbf{F}}\Big(\big\|\mathbf{D}g_i\big(\widetilde Y(t_n)\big)\big\|_{\mathbf{F}}\big\|g_j\big(\widetilde Y(t_n)\big)\big\|\\
&\qquad\qquad\qquad+\big\|\mathbf{D}g_j\big(\widetilde Y(t_n)\big)\big\|_{\mathbf{F}}\big\|g_i\big(\widetilde Y(t_n)\big)\big\|\Big)
\left|I_{i}^{t_n,r}\right|\,\left|I_{j}^{t_n,r}\right|  \Bigg| \mathcal{F}_{t_n} \Bigg]\\
\leq\,\,&\frac{1}{4}m(m-1) C_{Df}^2 C_{Dg_i}^2  C_{g_i}^2\mathbb{E}\Big[\|E_{\theta}(r)\|^2\Big|\mathcal{F}_{t_n}\Big]+\frac{1}{8}m(m-1)|r-t_n|^2. \numberthis\label{eq:H_4}
\end{align*}
For $H_{5}$ in \eqref{eq:E[H]}, by the Cauchy-Schwarz inequality, triangle  inequality, \eqref{eq:ab<a^2+b^2}, \eqref{eq:constants define}, \eqref{eq:Df+Dg}, and Lemma \ref{lem:levy bound} with $b=2$, we have 
\begin{align*}
H_{5}\leq&\,\,\mathbb{E}\Bigg[\frac{1}{2}\sum_{\substack{i,j=1\\i<j}}^{m}\Bigg(\big\|\mathbf{D}f\big(\widetilde Y(t_n)\big)\|_{\mathbf{F}}^2\|E_{\theta}(r)\|^2\Big(\big\|\mathbf{D}g_i\big(\widetilde Y(t_n)\big)\big\|_{\mathbf{F}}^2\big\|g_j\big(\widetilde Y(t_n)\big)\big\|^2\\
&\quad\qquad\qquad+\big\|\mathbf{D}g_i\big(\widetilde Y(t_n)\big)\big\|_{\mathbf{F}}^2\big\|g_j\big(\widetilde Y(t_n)\big)\big\|^2\Big)+\big|A_{ij}(t_n, r)\big|^2\Bigg)\Bigg|\mathcal{F}_{t_n} \Bigg]\\
\leq&\,\,\frac{1}{2}m(m-1)  C_{Df}^2 C_{Dg_i}^2  C_{g_i}^2\mathbb{E}\Big[\|E_{\theta}(r)\|^2\Big|\mathcal{F}_{t_n}\Big]\\
&\quad\qquad\qquad+\frac{1}{4}m(m-1)(\Clevy{2})^2|r-t_n|^2. \numberthis\label{eq:H_5}
\end{align*}
For $H_{6}$ in \eqref{eq:E[H]}, by the Cauchy-Schwarz inequality, triangle inequality, and \eqref{eq:ab<a^2+b^2}, we have (noting that $\| [\cdot ]^2 \|_\mathbf{F}=\| \cdot \|^2$)
\begin{align*}
H_{6}\leq\,\,& \mathbb{E}\bigg[\|E_{\theta}(r)\| \big\| Y_{\theta}(r)-\widetilde Y(t_n)\big\|^2\\
&\qquad\qquad\times\left\| \int_{0}^{1}(1-\epsilon)\mathbf{D}^2f\Big(\widetilde Y(t_n)-\epsilon\cdot\big(Y_{\theta}(r)-\widetilde Y(t_n)\big)\Big)d\epsilon \right\|_{\mathbf{T}_3}\bigg|\mathcal{F}_{t_n} \bigg]\\
\leq\,\,& \frac{1}{2}\mathbb{E}\Big[\|E_{\theta}(r)\|^2\Big|\mathcal{F}_{t_n}\Big]+ \frac{1}{2}\underbrace{\sqrt{\mathbb{E}\left[\big\| Y_{\theta}(r)-\widetilde Y(t_n)\big\|^8\middle|\mathcal{F}_{t_n} \right]}}_{H_{6.1}}\\
&\qquad\times\underbrace{\sqrt{\mathbb{E}\left[ \left\| \int_{0}^{1}(1-\epsilon)\mathbf{D}^2f\Big(\widetilde Y(t_n)-\epsilon\cdot\big(Y_{\theta}(r)-\widetilde Y(t_n)\big)\Big)d\epsilon \right\|_{\mathbf{T}_3}^4 \middle|\mathcal{F}_{t_n} \right]}}_{H_{6.2}}.
\end{align*}
From \eqref{eq:||eror||^2n} in Lemma \ref{lmm:||eror||^2n} with $k=4$, we have   $H_{6.1}\leq \Cscheme{4,R}^{1/2} |r-t_n|^2.$
From \eqref{eq:T01} in Lemma \ref{lmm:T01} with $k=2$, we have $H_{6.2} \leq  \Ctaylor{2,R}^{1/2}$.  
Therefore, $H_{6}$ in \eqref{eq:E[H]} becomes  
\begin{align}
H_{6}\leq \, \frac{1}{2}\mathbb{E}\Big[\|E_{\theta}(r)\|^2\Big|\mathcal{F}_{t_n}\Big] +\frac{1}{2}  \Cscheme{4,R}^{1/2}\, \Ctaylor{2,R}^{1/2}\,|r-t_n|^2.   \label{eq:H_6}
\end{align}
Substituting \eqref{eq:H_1}, \eqref{eq:H_2_end}, \eqref{eq:H_3}, \eqref{eq:H_4}, \eqref{eq:H_5} and \eqref{eq:H_6} back into \eqref{eq:E[H]} for $H$, we have  
\begin{multline}
    \mathbb{E}\big[H \big|\mathcal{F}_{t_n}\big] \leq \,\, K_1(R) \mathbb{E}\Big[\|E_{\theta}(r)\|^2\Big|\mathcal{F}_{t_n}\Big]+\overline K^{\{4(q+2)\}}_2(R)|r-t_n|^2\\
    +\sqrt{2}m^2 C_{Df} C_{g_i}|r-t_n|^{1/2}\left(\int_{t_n}^{t_{n+1}} \mathbb{E}\Big[\big\|g(X(r))-g(Y_{\theta}(r))\big\|_{\mathbf{F}(d\times m)}^2 \Big|\mathcal{F}_{t_n}\Big]dr\right)^{1/2},  \label{eq:E[H]_end}
\end{multline}
where  
\begin{align}
K_1(R) :=\,\,\frac{1}{2} + \frac{1}{2} C_{Df}^2 C_{f}^2+m(m-1) C_{Df}^2 C_{Dg_i}^2 C_{g_i}^2,\label{eq:K1} 
\end{align}
and with $\overline C^{\{4(q+2)\}}_{H2.21}$ from \eqref{eq:C_H221_bar}
\begin{multline}
\overline K^{\{4(q+2)\}}_2(R) :=\,\,\frac{1}{2}+m C_{Df} C_{g_i} \bigg(\left(\overline C^{\{4(q+2)\}}_{H2.21}\right)^{1/2}
+\frac{1}{2}(\Cp{4}+1)m\\
+\frac{1}{4}m(m-1)\big(1+\left(\Clevy{2}\right)^2\big)+\frac{1}{2}  \Cscheme{4,R}^{1/2} \Ctaylor{2,R}^{1/2}.   \label{eq:K_2_bar}
\end{multline}
Substituting $\mathbb{E}[H|\mathcal{F}_{t_n}]$ from \eqref{eq:E[H]_end} back into \eqref{eq:err+H}, we have  
\begin{multline}
    \mathbb{E}\Big[\big\|E_{\theta}(t_{n+1})\big\|^2 \Big|\mathcal{F}_{t_n}\Big]\leq\big\|\widetilde E(t_n)\big\|^2+2K_1(R)\int_{t_n}^{t_{n+1}}\mathbb{E}\Big[\|E_{\theta}(r)\|^2\Big|\mathcal{F}_{t_n}\Big]dr\\
    +mC_{G}(R) h_{n+1}^3 +\overline K^{\{4(q+2)\}}_2(R) h_{n+1}^3\\
    +2\int_{t_n}^{t_{n+1}}\mathbb{E}\big[J_{f,g}\big|\mathcal{F}_{t_n}\big]dr+\sqrt{2 }m^2 C_{Df} C_{g_i}h_{n+1}^{3/2}\\
    \times\left(\int_{t_n}^{t_{n+1}} \mathbb{E}\Big[\big\|g(X(r))-g(Y_{\theta}(r))\big\|_{\mathbf{F}(d\times m)}^2 \Big|\mathcal{F}_{t_n}\Big]dr\right)^{1/2}. \label{eq:err_combined}
\end{multline}
Using \eqref{eq:ab<a^2+b^2} on the last term on the RHS of \eqref{eq:err_combined}, we have 
\begin{multline}
    \quad\mathbb{E}\Big[\big\|E_{\theta}(t_{n+1})\big\|^2 \Big|\mathcal{F}_{t_n}\Big]\leq\,\, \big\|\widetilde E(t_n)\big\|^2+2K_1(R)\int_{t_n}^{t_{n+1}} \mathbb{E}\Big[\|E_{\theta}(r)\|^2\Big|\mathcal{F}_{t_n}\Big]dr\\
    +\overline C_M^{\{4(q+2)\}}(R)\,h_{n+1}^3
    +2\int_{t_n}^{t_{n+1}}\mathbb{E}\bigg[J_{f,g}+\frac{1}{2}\Big\|g(X(r))-g(Y_{\theta}(r))\big\|_{\mathbf{F}(d\times m)}^2\bigg|\mathcal{F}_{t_n}\bigg]dr,   \label{eq:err_combined2}
\end{multline}
where $\overline C^{\{4(q+2)\}}_M$ is as defined in \eqref{eq:C_h_bar}.
Recall $J_{f,g}$ is given in \eqref{eq:J_fg} so that
\begin{multline*}
    J_{f,g}+\frac{1}{2}\Big\|g(X(r))-g(Y_{\theta}(r))\big\|_{\mathbf{F}(d\times m)}^2\\ =\Big\langle E_{\theta}(r),f(X(r))-f(Y_{\theta}(r))\Big\rangle+\frac{3}{2}\big\|g(X(r))-g(Y_{\theta}(r))\big\|_{\mathbf{F}(d\times m)}^2.
\end{multline*}
By Assumption \ref{ass:SDEmoments_power} we can apply the monotone condition \eqref{eq:monotone}:  
\begin{align*}
    \mathbb{E}\Big[\big\|E_{\theta}(t_{n+1})\big\|^2 \Big|\mathcal{F}_{t_n}\Big] \leq\,\, \big\|\widetilde E(t_n)\big\|^2+C_E(R)\int_{t_n}^{t_{n+1}} \mathbb{E}\Big[\|E_{\theta}(r)\|^2\Big|\mathcal{F}_{t_n}\Big]dr+\overline C_M^{\{4(q+2)\}}(R)\,h_{n+1}^3, 
\end{align*}
where $C_E(R)$ is in \eqref{eq:C_E}.

To obtain the the final estimate on 
$C_M(R)$ in the Lemma we use the 
explicit form of $\overline C_M^{\{4(q+2)\}}$, $\overline K_2^{\{4(q+2)\}}$ , $\overline C_{H2.21}^{\{4(q+2)\}}$, given by \eqref{eq:C_h_bar}, \eqref{eq:K_2_bar}, and \eqref{eq:C_H221_bar} respectively, \eqref{eq:E[C_PR_bar]} in the statement of Lemma \ref{lmm:T01}, \eqref{eq:SDEmoments}, and Assumption \ref{ass:SDEmoments_power} we bound the expectation of $\overline C_M^{\{4(q+2)\}}$ as follows,
\begin{align*}
       C_M(R)\,:=\,\,&\mathbb{E}\Bigg[ \overline C_M^{\{4(q+2)\}}(R)\Bigg]\\
      \leq&\,m^4 C^2_{Df} C^2_{g_i}+  2m C_{Df} C_{g_i} \Big(  c_3 \Cpath{2}^{1/4} \big(1+3^{q_1+1}\Cx\big)+C_{G}(R)^{1/2} \Big)\\
      &\quad+\frac{1}{2}(\Cp{4}+1)m+\frac{1}{2}m(m-1)\big(1+(\Clevy{2})^2\big)\\
      &\quad+  \Cscheme{4,R}^{1/2} \Ctaylor{2,R}^{1/2}+mC_{G}(R)+1. \numberthis \label{eq:E[C_h]}
\end{align*}
\end{proof}

\subsection{Proof of Theorem \ref{thm:result} on strong convergence.} \label{sec:proof_thm_4.1}
\begin{proof}
Firstly, by \eqref{eq:defE(r)_combined} we have the conditional second-moment bound of the one-step error as 
\begin{align}
\mathbb{E}\Big[\big\|\widetilde E(t_{n+1})\big\|^2 \Big|\mathcal{F}_{t_n}\Big] =\,\,\mathbb{E}\Big[\big\|E_{\theta}(t_{n+1})\big\|^2 \Big|\mathcal{F}_{t_n}\Big]+\mathbb{E}\Big[\big\|E_{\varphi}(t_{n+1})\big\|^2 \Big|\mathcal{F}_{t_n}\Big],  \label{eq:err_combined_Ft}
\end{align}
where by \eqref{eq:defbackstop} and \eqref{eq:defE_varphi}, the one-step error bound of the backstop map yields 
\begin{align}
    \mathbb{E}\Big[\big\|E_{\varphi}(t_{n+1})\big\|^2 \Big|\mathcal{F}_{t_n}\Big] \leq\,\, \big\|\widetilde E(t_n)\big\|^2+C_{B_1}\int_{t_n}^{t_{n+1}} \mathbb{E}\Big[\|E_{\varphi}(r)\|^2\Big|\mathcal{F}_{t_n}\Big]dr+C_{B_2}\,h_{n+1}^3, \quad a.s.   \label{eq:varphi_error}
\end{align}
Therefore, by substituting \eqref{eq:Case_I} and \eqref{eq:varphi_error} into \eqref{eq:err_combined_Ft}, and recalling \eqref{eq:defE(r)_combined} we have for any $h_{n+1}$ that satisfies Assumption \ref{ass:h},
\begin{multline*}
\mathbb{E}\Big[\big\|\widetilde E(t_{n+1})\big\|^2 \Big|\mathcal{F}_{t_n}\Big]
\leq\,\, \big\|\widetilde E(t_{n})\big\|^2  + \Gamma_1(R) \int_{t_n}^{t_{n+1}} \mathbb{E}\Big[\big\|\widetilde E(r)\big\|^2 \Big|\mathcal{F}_{t_n}\Big]dr\\
+\overline \Gamma_2^{\{4(q+2)\}}\big(R \big)   h_{n+1} ^3,\quad a.s. \numberthis \label{eq:err_before_sum}
\end{multline*}
where we define $\Gamma_1$, $\overline \Gamma_2$ and by \eqref{eq:E[C_h]} its expected form as 
\begin{align*}
    \Gamma_1(R):=&C_E(R)+C_{B_1};\\
    \overline \Gamma_2^{\{4(q+2)\}}\big(R\big) :=&\overline C_M^{\{4(q+2)\}}(R)+ C_{B_2};\\
    \Gamma_2(R):=&\mathbb{E}\bigg[\overline \Gamma_2^{\{4(q+2)\}}\big(R \big) \bigg] \leq C_M(R) + C_{B_2}. \numberthis \label{eq:E[Gamma_2]}
\end{align*}
For a fixed $t>0$, let $N^{(t)}$ be as in Definition \ref{def:N}, we multiply both sides of \eqref{eq:err_before_sum} with the indicator function $\mathbf{1}_{\{N^{(t)}> n+1\}}$ and sum  up the steps excluding the last step $N^{(t)}$ to have 
\begin{multline}
   \sum_{n=0}^{N^{(t)}-2}\mathbb{E}\Big[\big\|\widetilde E(t_{n+1})\big\|^2 \Big|\mathcal{F}_{t_n}\Big] \mathbf{1}_{\{N^{(t)}> n+1\}} \leq \sum_{n=0}^{N^{(t)}-2}\big\|\widetilde E(t_{n})\big\|^2\mathbf{1}_{\{N^{(t)}> n+1\}} \\
   +\, \Gamma_1(R) \sum_{n=0}^{N^{(t)}-2}\int_{t_n}^{t_{n+1}}\mathbb{E}\Big[\big\|\widetilde E(r)\big\|^2 \Big|\mathcal{F}_{t_n}\Big]\mathbf{1}_{\{N^{(t)}> n+1\}}dr\\
   \qquad+ \overline \Gamma_2^{\{4(q+2)\}}(R)\sum_{n=0}^{N^{(t)}-2} h_{n+1}^3\mathbf{1}_{\{N^{(t)}> n+1\}}.  \label{eq:err_summed}
\end{multline}
Since $t\in\big[t_{N^{(t)}-1},t_{N^{(t)}}\big]$, we use \eqref{eq:err_before_sum} to express the last step, noting that it holds when $t_n,t_{n+1}$ are replaced by $t_{N^{(t)}-1}$ and \Note{$t$} respectively:
\begin{multline}
    \mathbb{E}\Big[\big\|\widetilde E(t )\big\|^2 \Big|\mathcal{F}_{t_{N^{(t)}-1}}\Big] \leq\, \big\|\widetilde E(t_{N^{(t)}-1})\big\|^2+\Gamma_1(R)\int_{t_{N^{(t)}-1}}^{t} \mathbb{E}\Big[\big\|\widetilde E(r)\big\|^2 \Big|\mathcal{F}_{t_{N^{(t)}-1}}\Big]dr\\
    +\overline \Gamma_2^{\{4(q+2)\}}(R) \big|t-t_{N^{(t)}-1}\big|^3.   \label{eq:err_extra}
\end{multline}
By adding the both sides of \eqref{eq:err_summed} and \eqref{eq:err_extra},  
and taking an expectation: 
\begin{align*}
    &\left. \begin{array}{l}
        \qquad\mathbb{E}\Bigg[\displaystyle \sum_{n=0}^{N^{(t)}-2}\Big(\mathbb{E}\Big[\big\|\widetilde E(t_{n+1})\big\|^2 \Big|\mathcal{F}_{t_n}\Big]-\big\|\widetilde E(t_{n})\big\|^2\Big)\mathbf{1}_{\{N^{(t)}> n+1\}} \\
    \,\,\quad\quad\qquad\quad\qquad\qquad+\mathbb{E}\Big[\big\|\widetilde E(t)\big\|^2 \Big|\mathcal{F}_{t_{N^{(t)}-1}}\Big]-\big\|\widetilde E(t_{N^{(t)}-1})\big\|^2 \Bigg] \\
    \end{array}\right\} =:\text{LHS}\\
    &\left. \begin{array}{l}
        \leq\,\,\Gamma_1(R)\mathbb{E}\Bigg[ \displaystyle \sum_{n=0}^{N^{(t)}-2}\int_{t_n}^{t_{n+1}}\mathbb{E}\Big[\big\| \widetilde E(r)\big\|^2 \Big|\mathcal{F}_{t_n}\Big]\mathbf{1}_{\{N^{(t)}> n+1\}}dr\\
    \qquad\qquad\qquad\qquad\qquad\qquad+\displaystyle \int_{t_{N^{(t)}-1}}^{t} \mathbb{E}\Big[\big\| \widetilde E(r)\big\|^2 \Big|\mathcal{F}_{t_{N^{(t)}-1}}\Big]dr\Bigg] \\
    \end{array}\right\} =: \text{R}_1  \\
    &\left. \begin{array}{l}
        \quad+\mathbb{E}\Bigg[\overline \Gamma_2^{\{4(q+2)\}}(R)\Bigg(\displaystyle\sum_{n=0}^{N^{(t)}-2} h_{n+1}^3\mathbf{1}_{\{N^{(t)}> n+1\}}+\big|t-t_{N^{(t)}-1}\big|^3\Bigg)\Bigg]
    \end{array}\right\} =: \text{R}_2 \numberthis \label{eq:LHS=R1+R2}
\end{align*}
where we analyse \eqref{eq:LHS=R1+R2} ($\text{LHS}\leq\text{R}_1+\text{R}_2$) below. 
\NOTE{For the LHS in (7.51), $N^{(t)}$ is a random number taking value from $N^{(t)}_{\min}$ to $N^{(t)}_{\max}$, and $\mathbf{1}_{\{N^{(t)}> n+1\}}$ is an
$\mathcal{F}_{t_{n}}$-measurable random variable. Therefore it is useful decompose the range of $n$ into three parts on each trajectory.}
% For the LHS in \eqref{eq:LHS=R1+R2}, since $N^{(t)}$ is a random number taking value from $N^{(t)}_{\min}$ to $N^{(t)}_{\max}$ and $\mathbf{1}_{\{N^{(t)}> n+1\}}$ are
% $\mathcal{F}_{t_{n}}$-measurable.
% We can decompose the range of $n$ into three parts on each trajectory. 
First, when $n<N^{(t)}-1$, then $1_{\{N^{(t)}>n+1\}}=1_{\{N^{(t)}>n\}}=1$. Second,  when $n=N^{(t)}-1$, then $1_{\{N^{(t)}>n+1\}}=0$ and $1_{\{N^{(t)}>n\}}=1$. Finally, when $n>N^{(t)}-1$, then $1_{\{N^{(t)}>n+1\}}=1_{\{N^{(t)}>n\}}=0$. 
Hence we obtain a telescoping sum with the appropriate cancellation that terminates at $\mathbb{E}\big[\|\widetilde{E}(t_{N^{(t)}-1})\|^2\,1_{\{N^{(t)}>N^{(t)}-1\}}\big]=\mathbb{E}\big[\|\widetilde{E}(t_{N^{(t)}-1})\|^2\big]$.
Applying this with the tower property for conditional expectations, and using the fact that $\|\widetilde{E}(t_0)\|^2=0$, we have
\begin{align*}
    \text{LHS}= &\sum_{n=0}^{N_{\max}^{(t)}-2}\mathbb{E}\Big[\big\|\widetilde E(t_{n+1})\big\|^2 \mathbf{1}_{\{N^{(t)}> n+1\}} -\big\|\widetilde E(t_{n})\big\|^2 \mathbf{1}_{\{N^{(t)}> n+1\}}\Big]\\
    &\qquad\qquad\qquad\qquad\qquad+\mathbb{E}\Big[\mathbb{E}\Big[\big\|\widetilde E(t )\big\|^2 \Big|\mathcal{F}_{t_{N^{(t)}-1}}\Big]-\big\|\widetilde E(t_{N^{(t)}-1})\big\|^2\Big] \\
    =\,\,&\mathbb{E}\Big[\big\|\widetilde E(t_{N^{(t)}-1})\big\|^2\Big]-\mathbb{E}\Big[\big\|\widetilde E(t_{0})\big\|^2\Big]+ \mathbb{E}\big[\big\|\widetilde E(t )\big\|^2  \big]-\mathbb{E}\Big[\big\|\widetilde E(t_{N^{(t)}-1})\big\|^2\Big]\\
    =\,\,&\mathbb{E}\Big[\big\|\widetilde E(t )\big\|^2  \Big]. \numberthis \label{eq:LHS}
\end{align*}
\NOTE{Consider the term  $\text{R}_1$ on the RHS of \eqref{eq:LHS=R1+R2}. By Definition \ref{def:N} we have each $n=N^{(r)}-1$ for $r\in[t_n,t_{n+1}]$. So we restate  $\mathcal{F}_{t_n}$ as $\mathcal{F}_{t_{N^{(r)}-1}}$, and the indicator function as $\mathbf{1}_{\{N^{(t)}>N^{(r)}\}}$. Summing up all the steps results in an integral from $0$ to $t_{N^{(t)}-1}$ that}
% Similarly for $\text{R}_1$ in \eqref{eq:LHS=R1+R2}, we have 
\begin{align*}
    \text{R}_1=\,\,&\Gamma_1(R)\mathbb{E}\Bigg[ \int_{0}^{t_{N^{(t)}-1}}\mathbb{E}\Big[\big\| \widetilde E(r)\big\|^2\mathbf{1}_{\{N^{(t)}>N^{(r)}\}} \Big|\mathcal{F}_{t_{N^{(r)}-1}}\Big]dr\\
    &\qquad\qquad\qquad\qquad\qquad\qquad+\int_{t_{N^{(t)}-1}}^{t} \mathbb{E}\Big[\big\| \widetilde E(r)\big\|^2 \Big|\mathcal{F}_{t_{N^{(t)}-1}}\Big]dr\Bigg]\\
    \leq\,\,&\Gamma_1(R)\int_{0}^{t} \mathbb{E}\Big[\big\| \widetilde E(r)\big\|^2 \Big]dr. \numberthis \label{eq:R_1}
\end{align*}
For $\text{R}_2$ in \eqref{eq:LHS=R1+R2}, by \eqref{eq:E[Gamma_2]}, Definition \ref{def:N} and $\rho\hmin=\hmax$, we have
\begin{equation}
  \text{R}_2\leq\Gamma_2(R) N^{(t)}_{\max}\hmax^3
    \leq \Gamma_2(R)\left(\rho t+1\right)\hmax^2.  \label{eq:R_2}
\end{equation}
We see that $4(q+2)$ is the minimum number of finite SDE moments required for a finite \NOTE{$\text{R}_2$}, and this is guaranteed by Assumption \ref{ass:SDEmoments_power}. 
Combining \eqref{eq:LHS}, \eqref{eq:R_1}  and  \eqref{eq:R_2} back into \eqref{eq:LHS=R1+R2}, for all $t\in [0,T]$, we have
\begin{align*}
    \mathbb{E}\Big[\big\| \widetilde E(t)\big\|^2 \Big]\,\,\leq\,\, \Gamma_1(R)\int_{0}^{t} \mathbb{E}\Big[\big\| \widetilde E(r)\big\|^2 \Big]dr +\Gamma_2(R)\left(\rho t+1\right)\hmax^2.
\end{align*}
By Gronwall's inequality (see \cite[Thm. 8.1]{mao2007SDEapp}), we have for all $t\in [0,T]$
\begin{align}
   \Big(\mathbb{E}\Big[\big\| \widetilde E(t)\big\|^2 \Big]\Big)^{\frac{1}{2}}\leq C(R,\rho,t)\,\hmax. \label{eq:result_t}
\end{align}
Taking the maximum over $t$ on the both sides, the proof follows with
\begin{align*}
    C(R,\rho,t):=\sqrt{\big(  C_M(R)+ C_{B_2}\big)\left(\rho t+1\right)\exp\Big(t\big(C_E(R)+C_{B_1}\big)\Big)}.
\end{align*}
\end{proof}

\subsection{Proof of Theorem \ref{thrm:MarkovIneq} on the probability of using the backstop.} \label{sec:proof_thm_4.2}
\begin{proof}
By \eqref{eq:defh} and by the Markov inequality we have
\begin{align}
\mathbb{P}\big[h_{n+1}= h_{\min}\big]=\mathbb{P}\left[\frac{h_{\max}}{\big\|\widetilde Y(t_n)\big\|^{1/\kappa}}\leq h_{\min}\right]\leq\frac{\mathbb{E}\Big[\big\|\widetilde Y(t_n)\big\|^2\Big]}{\rho^{2\kappa}}. \label{eq:prob}
\end{align}
By adding in and subtracting out $X(t_n)$ together with the tower property of conditional expectation, \eqref{eq:Jen_sum}, \eqref{eq:defE(r)_combined} and \eqref{eq:SDEmoments}, we have
\begin{align*}
    \mathbb{E}\Big[\big\|\widetilde Y(t_n)\big\|^2\Big] \leq\,\,& 2\mathbb{E}\Big[\big\|X(t_n)-\widetilde Y(t_n)\big\|^2\Big]+2\mathbb{E}\big[\|X(t_n)\|^2\big]\\
    \leq\,\,& 2\mathbb{E}\Big[\mathbb{E}\Big[\big\|X(t_n)-\widetilde Y(t_n)\big\|^2\Big| \mathcal{F}_{t_{n-1}}\Big]\Big]+2\mathbb{E}\bigg[\sup_{t_n\in[0,T]}\|X(t_n)\|^2\bigg]\\
    \leq\,\,& 2\mathbb{E}\Big[\mathbb{E}\Big[\big\|\widetilde E(t_n)\big\|^2\Big| \mathcal{F}_{t_{n-1}}\Big]\Big]+2\Cx. \numberthis \label{eq:iteration}
\end{align*}
\NOTE{Next, we repeatedly substitute \eqref{eq:err_before_sum} for decreasing values of $n$ into the RHS of \eqref{eq:iteration} until $n=0$. Then with tower property, Definition \ref{def:N}, \eqref{eq:defh} and \eqref{eq:E[Gamma_2]}, we have }

%Using \eqref{eq:err_before_sum}, we iterate $\mathbb{E}\Big[\big\|\widetilde E(t_n)\big\|^2\Big| \mathcal{F}_{t_{n-1}}\Big]$ back to $t_0=0$ using the tower property at each step. With Definition \ref{def:N}, \eqref{eq:defh} and \eqref{eq:E[Gamma_2]} we have 
\begin{align*}
    \mathbb{E}\Big[\big\|\widetilde Y(t_n)\big\|^2\Big]  \leq\,\,& 2\mathbb{E}\Big[\big\|\widetilde E(t_{n-1})\big\|^2\Big]+2\Gamma_1(R)\mathbb{E}\Bigg[\int_{t_{n-1}}^{t_n}\mathbb{E}\Big[\big\|\widetilde E(r)\big\|^2\Big| \mathcal{F}_{t_{n-1}}\Big]dr\Bigg]\\
    &\qquad+2\mathbb{E}\Bigg[\overline \Gamma_2^{\{4(q+2)\}}(R)   h_{n} ^3\Bigg]+2\Cx\\
    \leq\,\,& 2\mathbb{E}\Big[\big\|\widetilde E(t_{0})\big\|^2\Big]+2\Gamma_1(R)\mathbb{E}\Bigg[\sum_{j=1}^{n}\int_{t_{j-1}}^{t_j}\mathbb{E}\Big[\big\|\widetilde E(r)\big\|^2\Big| \mathcal{F}_{t_{j-1}}\Big]dr\Bigg]\\
    &\qquad+2N^{(T)}_{\max}\Gamma_2(R)  \hmax ^3+2\Cx\\
    \leq\,\,& 2\Gamma_1(R)\mathbb{E}\Bigg[\int_{0}^{t_n}\mathbb{E}\Big[\big\|\widetilde E(r)\big\|^2\Big| \mathcal{F}_{t_{N^{(r)}-1}}\Big]dr\Bigg]\\
    &
    \qquad+2\left(\rho\,T+1 \right)\Gamma_2(R)  \hmax ^3+2\Cx. \numberthis \label{eq:prob_iterate}
\end{align*}
Since the integrand $\mathbb{E}\Big[\big\|\widetilde E(r)\big\|^2\Big| \mathcal{F}_{t_{N^{(r)}-1}}\Big]$ in the second term on the RHS of \eqref{eq:prob_iterate} is almost surely non-negative for all $r\in[0,T]$, we can replace the upper limit of integration with $T$. Using $\widetilde E(t_{0})=0$, \eqref{eq:defh}, the tower property of conditional expectation, and \eqref{eq:result_t} from Theorem \ref{thm:result}, we have 
\begin{multline}
    \mathbb{E}\Bigg[\int_{0}^{t_n}\mathbb{E}\Big[\big\|\widetilde E(r)\big\|^2\Big| \mathcal{F}_{t_{N^{(r)}-1}}\Big]dr\Bigg] \leq \int_{0}^{T}\mathbb{E}\Big[\big\|\widetilde E(r)\big\|^2\Big]dr\\ \leq T\max_{r\in[0,T]}\mathbb{E}\Big[\big\|\widetilde E(r)\big\|^2\Big] \leq T\,C^2(R,\rho,T)\hmax^2.   \label{eq:prob_Yn}
\end{multline}
By choosing $\hmax\leq 1/C(R,\rho,T)$, we substitute \eqref{eq:prob_Yn} into \eqref{eq:prob_iterate} and then \eqref{eq:prob} to get 
\begin{equation} 
    \mathbb{P}\big[h_{n+1}\,=\,\, h_{\min}\big] \,\leq\,\,
    \frac{2\Big(\Gamma_1(R)+\left(T+1 \right)\Gamma_2(R)  \hmax ^2+\Cx\Big)}{\rho^{2\kappa-1}}=:\frac{C_{\text{prob}}}{\rho^{2\kappa-1}},
\end{equation}
and the rest of the proof follows.
\end{proof}
\begin{appendices} 
\section{Proof of Lemma \ref{lem:levy bound} (\texttt{L\'evy Area})} \label{sec:appendix}
\label{sec:levybound}
\begin{proof} Set $\mathtt{i}^2=-1$.
Since the pair of Wiener processes $(W_i(r),W_j(r))^T$, $r\in [t_n, s]$, are mutually independent, by \cite[Eq. (1.3.5)]{levy1951wiener} the characteristic function of the L\'evy area \eqref{def:levy area} is given by
$\phi(\lambda)=(\cosh\left(\frac{1}{2}|s-t_n|\lambda\right))^{-1}.$
This was applied in the context of numerical methods for SDEs 
in \cite{malham2014efficient}. 
The Taylor expansion of the function $\cosh\left(\frac{1}{2}|s-t_n|\lambda\right)$ around $0$ gives
\begin{equation*}
\phi(\lambda)=\sum_{N=0}^{\infty}\frac{\textbf{E}_{2N}}{(2N)!}\left(\frac{1}{2}|s-t_n|\right)^{2N}\,\lambda^{2N},\quad \left|\frac{1}{2}|s-t_n|\lambda\right|<\frac{\pi}{2},
\end{equation*}
where $\textbf{E}_{2N}$ stands for the $2N^{\text{th}}$ Euler number, which may be expressed as
\begin{equation*}
\textbf{E}_{2N}=\mathtt{i}\sum_{b=1}^{2N+1}\sum_{j=0}^{b}\binom{j}{b}\frac{(-1)^j (b-2j)^{2N+1}}{2^b\,\mathtt{i}^b\,b},\quad N=0,1,2,3,\dots.
\end{equation*}
All odd Euler numbers are zero. The $k^{\text{th}}$ derivative of the characteristic function with respect to $\lambda$ is
\begin{equation*}
\phi(\lambda)_\lambda^{(k)}=\sum_{N=\lceil\frac{k}{2}\rceil}^{\infty}\left(\prod^{k-1}_{B=0}(2N-B)\right)\frac{\textbf{E}_{2N}}{(2N)!}\left(\frac{1}{2}|s-t_n|\right)^{2N}\,\lambda^{2N-k}.
\end{equation*}
As $\lambda\rightarrow 0$, since all terms vanish unless $k=2N$, we have
\begin{align*}
\lim_{\lambda\rightarrow 0}\phi(\lambda)_\lambda^{(k)}=\begin{cases}
\displaystyle\left(\prod^{k-1}_{B=0}(k-B)\right)\frac{\textbf{E}_{k}}{(k)!}\left(\frac{1}{2}|s-t_n|\right)^{k}, & k\,\text{ even};\\
\,\,0, & k\,\text{ odd}.
\end{cases}
\end{align*}
In the calculation of expectations, we make use of the mutual independence, conditional upon $\mathcal{F}_{t_n}$, of the pair of Brownian increments $(W_i(t),W_j(t))^T$.
Therefore, the $k^{\text{th}}$ conditional moment of $A_{ij}^{t_n,s}$ is 
\begin{equation*}
\mathbb{E}\Big[\big(A_{ij}^{t_n,s}\big)^k \Big|\mathcal{F}_{t_n}\Big]= L_k\,|s-t_n|^k,
\end{equation*}
where for all $a=1,2,3,\dots$
\begin{eqnarray*}
L_k&=& \left(\prod^{k-1}_{B=0}(k-B)\right)\frac{\textbf{E}_{k}}{(k)!}\left(-\frac{1}{2}\,\mathtt{i}\right)^{k}\\
&:=&\begin{cases}
 \left(\prod^{k-1}_{B=0}(k-B)\right)\frac{\textbf{E}_{k}}{(k)!}\left(\frac{1}{2}\right)^{k}, & \quad k=4a=4, 8, 12,\dots\\
 -\left(\prod^{k-1}_{B=0}(k-B)\right)\frac{\textbf{E}_{k}}{(k)!}\left(\frac{1}{2}\right)^{k}, & \quad k=4a-2=2, 6, 10 \dots \label{eq:Ib}\\
 0, & \quad k=2a-1=1, 3, 5,\dots
\end{cases}
\end{eqnarray*}
which is finite, as a finite product of finite factors. When $k$ is even, we have \begin{align*}
\mathbb{E}\Big[\big|A_{ij}^{t_n,s}\big|^k \Big|\mathcal{F}_{t_n}\Big]=
\mathbb{E}\Big[\big(A_{ij}^{t_n,s}\big)^k \Big|\mathcal{F}_{t_n}\Big]= L_k\,|s-t_n|^k, \quad a.s.
\end{align*} 
When $k$ is odd, i.e. $k=2c+1$ for all $c=0,1,2,\dots$, we have a.s.
\begin{align*}
\mathbb{E}\Big[\big|A_{ij}^{t_n,s}\big|^k \Big|\mathcal{F}_{t_n}\Big]&=\mathbb{E}\Big[\big|A_{ij}^{t_n,s}\big|^{2c+1} \Big|\mathcal{F}_{t_n}\Big]\\
&\leq\sqrt{
\mathbb{E}\Big[\big(A_{ij}^{t_n,s}\big)^{4c} \Big|\mathcal{F}_{t_n}\Big]\mathbb{E}\Big[\big(A_{ij}^{t_n,s}\big)^{2} \Big|\mathcal{F}_{t_n}\Big]}\\
&=\begin{cases}
\sqrt{L_2}\,|s-t_n|, &\quad c=0;\\
\displaystyle\sqrt{L_{4c}\cdot L_2}|s-t_n|^{2c+1}, &\quad c=1,2,3,\dots
\end{cases}\\
&=\begin{cases}
\displaystyle\sqrt{L_2}\,|s-t_n|, &\quad k=1;\\
\sqrt{L_{2k-2}\cdot L_2}|s-t_n|^{k}, &\quad k=3,5,7,\dots.
\end{cases}
\end{align*}
Therefore, in conclusion we have
\begin{align*}
\mathbb{E}\Big[\big|A_{ij}^{t_n,s}\big|^k \Big|\mathcal{F}_{t_n}\Big]\,\leq\,\, \Clevy{k}|s-t_n|^k,  \quad a.s.
\end{align*}
where
\begin{align*}
\Clevy{k}=\begin{cases}
\sqrt{L_2}, &\quad k=1;\\
\sqrt{L_{2k-2}\cdot L_2}, &\quad k=3,5,7,\dots; \numberthis \label{eq:widehat_Ib}\\
L_k, & \quad k=2,4,6,\dots
\end{cases}
\end{align*}
\end{proof}

\end{appendices}
\vspace{1cm}
The authors have no competing interests to declare that are relevant to the content of this article. 
% \bibliographystyle{spmpsci} 
% \bibliography{Manuscript}

\bibliographystyle{abbrv}
\bibliography{Manuscript}

\end{document}